\def\op{{\rm op}}
\def\ddA{{\rm A}}
\def\Br{{\rm Br}}
\def\BrM{{\rm BrM}}
\def\ddI{{\rm I}}
\def\ddC{{\rm C}}
\def\ddG{{\rm G}}
\def\hE{{\hat E}}
\def\eps{{\epsilon}}
\def\alp{{\alpha}}
\newcommand{\cA}{\mathcal{A}}
\newcommand{\N}{\mathbb N}
\newcommand{\R}{\mathbb R}
\newcommand{\Z}{\mathbb Z}
\def\mod{\mathrm{mod}}
\numberwithin{equation}{section}
\newtheorem{lemma}{Lemma}[section]
\newtheorem{prop}[lemma]{Proposition}
\newtheorem{thm}[lemma]{Theorem}
\theoremstyle{definition}
\newtheorem{defn}[lemma]{Definition}
\theoremstyle{remark}
\newtheorem{rem}[lemma]{Remark}
\def\b{\beta}
\def\alp{\alpha}
\begin{document}
\title{Brauer algebras of type $\ddI_2^n$($n\ge 5$)}
\author{Shoumin Liu}
\date{}
\maketitle

\begin{abstract}
We will present an algebra  related to the Coxeter group
of type $\ddI_2^n$ which can be  taken as a twisted subalgebra in
Brauer algebra of type $\ddA_{n-1}$. Also we will describe some properties
of this algebra.
\end{abstract}

\begin{section}{Introduction}
From studying the invariant theory for orthogonal groups,
Brauer discovered  Brauer algebras of type $\ddA$ in \cite{Brauer1937};  Cohen, Frenk and Wales extended it to the
definition of simply laced type in \cite{CFW2008}.
M\"{u}hlherr  described how to get Coxeter group of type $\ddI_2^{n}$ by twisting Coxeter group of  type $\ddA_{2n-1}$ in \cite{M1993}.
Here we will apply the similar approach as M\"{u}hlherr on $\Br(\ddA_{n-1})$, to get $\Br(\ddI_2^n)$. We give  the definition
of $\Br(\ddI_2^n)$ as follows.
\begin{defn}\label{0.1}
\emph{The Brauer algebra of type} $\ddI_2^{2m}$ for $m\in \N_{>2}$, denoted by $\Br(\ddI_2^{2m})$,
is a unital associative $\Z[\delta^{\pm 1}]$-algebra generated by $r_0$, $r_1$, $e_{0}$ and  $e_1$ subject to
the following relations and a   set $\Theta\subset \N$ consisting of
$\kappa_i$, $\eta_{j}$, $\xi_{j}$  $\theta_{j}$, where $i=0$, $1$, $j=1$, $\ldots$, $[m/2]$.
Symbols $[r_0r_1\cdots]_t$ and $[r_1r_0\cdots]_t$ stand for  words of length $t$ with $r_0$ and $r_1$ iterated.

\begin{eqnarray}
r_{i}^{2}&=&1, \    \label{0.1.3}
\\
r_ie_i &= & e_ir_i=e_i, \,\, \label{0.1.4}
\\
e_{i}^{2}&=&\delta^{\kappa_i} e_{i},\,   \label{0.1.5}
\\
e_1e_0e_1&=&\delta e_1,            \label{0.1.6}
\\
e_0[r_1r_0\cdots]_{2m-1}&=&[r_1r_0\cdots]_{2m-1}e_0,   \label{0.1.7}
\\
e_1[r_0r_1\cdots]_{2m-1}&=&e_1,     \label{0.1.8}
\\
\left[ r_0r_1\cdots\right]_{2m-1} e_1&=&e_1,      \label{0.1.9}
\\
e_0[r_1r_0\cdots]_{2k}e_1&=&\delta^{\theta_{k}}e_0e_1,\, \quad  0\le k\le [m/2]        \label{0.1.10}
\\
e_1[r_0r_1\cdots]_{2k}e_0&=&\delta^{\theta_{k}}e_1e_0,\, \quad  0\le k\le [m/2]                               \label{0.1.11}
\\
e_1[r_0r_1\cdots]_{2k-1}e_1&=&\delta^{\eta_{k}}e_1,\,\quad  0\le k\le [m/2] \label{0.1.12}
\\
\left[r_1r_0\cdots\right]_{2m}&=&[r_0r_1\cdots]_{2m}                                              \label{0.1.20}
\end{eqnarray}
and when $2k\le m$, let $l={\rm lcm}(k,m)$,
\begin{eqnarray}
&&e_0[r_1r_0\cdots]_{2k-1}e_0=\delta^{\xi_{k}}[r_1r_0\cdots]_{2m-1}e_0, \quad  \frac{l}{m}, \frac{l}{k}\quad \mbox{odd},\label{0.1.13}
\\
&&e_0[r_1r_0\cdots]_{2k-1}e_0=\delta^{\xi_{k}}e_0,\quad\quad\quad \frac{l}{m}\,\quad \mbox{even}, \label{0.1.14}
\\
&&e_0[r_1r_0\cdots]_{2k-1}e_0=\delta^{\xi_{k}}e_0e_1e_0,\quad \frac{l}{m}\quad \mbox{odd},\, \frac{l}{k}\quad \mbox{even}.\label{0.1.15}
\end{eqnarray}
 The submonoid
of the multiplicative monoid of $\Br(\ddI_2^{2m})$
generated by $\delta$, $\{r_i\}_{i=0}^{1}$ and $\{e_i\}_{i=0}^{1}$ is
denoted by $\BrM(\ddI_2^{2m})$. This is the monoid of monomials in
$\Br(\ddI_2^{2m})$.
\end{defn}

\begin{defn}\label{0.2}
\emph{The Brauer algebra of type} $\ddI_2^{2m-1}$ for $m\in \N_{>2}$, denoted by $\Br(\ddI_2^{2m-1})$,
is a unital associative $\Z[\delta^{\pm 1}]$-algebra generated by $r_0$, $r_1$, $e_{0}$ and  $e_1$ subject to
the following relations and  a  set $\Theta\subset \N$ consisting of
$\kappa_i$,  $\xi_{j}$  with $j=1$, $\ldots$, $m$, $i=0$, $1$, $\kappa_0=\kappa_1$.
\begin{eqnarray}
r_{i}^{2}&=&1,   \label{0.2.3}
\\
r_ie_i &= & e_ir_i=e_i, \, \label{0.2.4}
\\
e_{i}^{2}&=&\delta^{\kappa_i} e_{i},\,\,  \label{0.2.5}
\\
\left[r_0r_1\cdots\right]_{2m-2}e_0&=&e_1[r_0r_1\cdots]_{2m-2},   \label{0.2.6}
\\
e_0[r_1r_0\cdots]_{2k-1}e_0&=&\delta^{\xi_{k}}e_0,\quad  0<k<m \label{0.2.7}
\\
\left[r_1r_0\cdots\right]_{2m-1}&=&[r_0r_1\cdots]_{2m-1}.                                              \label{0.2.8}
\end{eqnarray}

 The submonoid
of the multiplicative monoid of $\Br(\ddI_2^{2m-1})$
generated by $\delta$, $r_0$, $r_1$, $e_0$, $e_1$ is
denoted by $\BrM(\ddI_2^{2m-1})$. This is the monoid of monomials in
$\Br(\ddI_2^{2m-1})$.
\end{defn}

It is well known that Coxeter group of type $\ddI_2^{n}$, denoted by $W(\ddI_2^n)$ can be gotten as a subgroup from
Coxeter group of type $\ddA_{n-1}$ denoted by $W(\ddA_{n-1})$ for an special partition on the Coxeter diagram of type $\ddA_{n-1}$.
The following is the partition to get $\ddI_2^6$ from $\ddA_{5}$.
\begin{figure}[!htb]
\begin{center}
\includegraphics[width=.9\textwidth,height=.2\textheight]{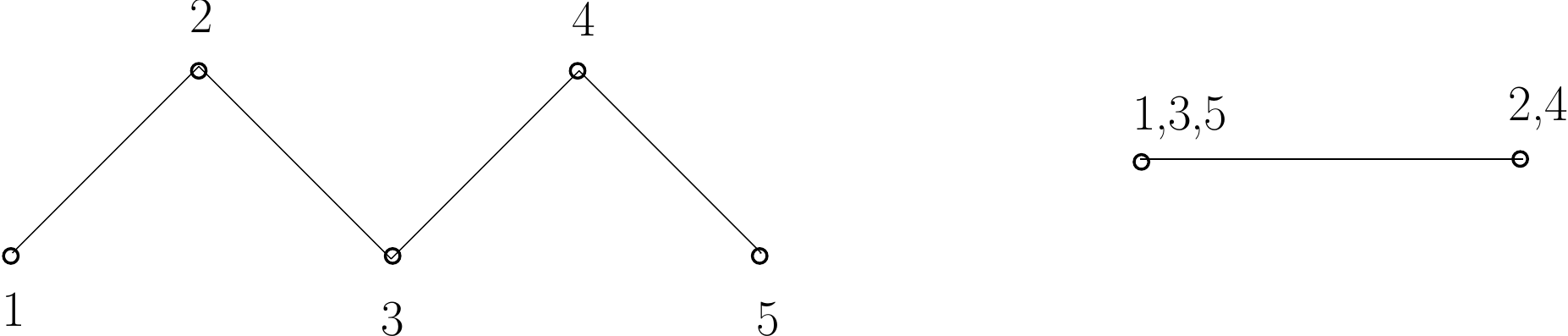}
\end{center}
\end{figure}
The main theorem of this paper can be stated as the following.
\begin{thm}\label{main}
For $n>4$, there is an  algebra isomorphism
$$\phi:\, \Br(\ddI_2^{n})\longrightarrow \Br(\ddA_{n-1})$$
determined by $\phi(r_0)=\prod_{i\, even}^{0<i<n}R_i$, $\phi(r_1)=\prod_{i\, odd}^{0<i<n}R_i$,
$\phi(e_0)=\prod_{i\, even}^{0<i<n}E_i$ and $\phi(e_1)=\prod_{i\, odd}^{0<i<n}R_i$ when each parameter
 in $\Theta$ takes special value in $\N$ to make $\phi$ an algebra homomorphism.
Furthermore,\\
 $rank_{\Z[\delta^{\pm 1}]} \Br(\ddI_2^{n})=$
$\begin{cases}
2n+n^2,\quad \text{if n is odd,}\\
2n+\frac{3}{2}n^2,\quad \text{if n is even.}
\end{cases}$
 \end{thm}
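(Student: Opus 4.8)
The plan is to prove the theorem in three stages: (i) for a suitable --- in fact forced --- choice of the parameters collected in $\Theta$, the assignment on generators specified in the statement extends to an algebra homomorphism $\phi\colon \Br(\ddI_2^n)\to\Br(\ddA_{n-1})$; (ii) $\Br(\ddI_2^n)$ is spanned over $\Z[\delta^{\pm1}]$ by an explicit family of at most $N$ monomials, where $N=2n+n^2$ for $n$ odd and $N=2n+\tfrac{3}{2}n^2$ for $n$ even; and (iii) the $\phi$-images of a well-chosen set of $N$ such monomials are $\Z[\delta^{\pm1}]$-linearly independent in $\Br(\ddA_{n-1})$. Stages (i)--(iii) together force $\phi$ to be injective, hence an isomorphism onto the ``twisted subalgebra'' $\phi(\Br(\ddI_2^n))\subseteq\Br(\ddA_{n-1})$, and give $\mathrm{rank}_{\Z[\delta^{\pm1}]}\Br(\ddI_2^n)=N$, which is the assertion.

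For stage (i) I would compute inside $\Br(\ddA_{n-1})$, using either the Brauer diagram calculus on $n$ strands or the presentation of \cite{CFW2008}, and write $\hat r_i=\phi(r_i)$, $\hat e_i=\phi(e_i)$. Because the even-indexed generators $R_j$ (resp.\ $E_j$) commute pairwise, and likewise the odd-indexed ones, the relations \eqref{0.1.3}--\eqref{0.1.5} (resp.\ \eqref{0.2.3}--\eqref{0.2.5}) are immediate, and the last one forces $\kappa_i=\#\{\,j: 0<j<n,\ j\equiv i\ (\mathrm{mod}\,2)\,\}$; one checks that $\kappa_0=\kappa_1$ holds precisely for $n$ odd, matching Definitions \ref{0.1} and \ref{0.2}. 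Relation \eqref{0.1.6} reduces to $E_iE_{i\pm1}E_i=E_i$ together with the commutation $E_iE_j=E_jE_i$ ($|i-j|\ge2$), and the braid relations \eqref{0.1.20}, \eqref{0.2.8} assert that $\hat r_0\hat r_1$ has order $n$, which holds because the $R_j$ generate a copy of the symmetric group $S_n$ inside $\Br(\ddA_{n-1})$ and $\hat r_0\hat r_1$ is the $n$-cycle produced by M\"uhlherr's partition \cite{M1993}. Each of the remaining relations --- the long-word relations \eqref{0.1.7}--\eqref{0.1.12}, the three cases \eqref{0.1.13}--\eqref{0.1.15}, and \eqref{0.2.6}--\eqref{0.2.7} --- is a single diagram identity: one evaluates the two sides as Brauer diagrams, observes that they agree as diagrams, and reads off the number of loops deleted on the left; that integer is the forced value of the corresponding $\theta_k$, $\eta_k$, or $\xi_k$. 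The case distinction in \eqref{0.1.13}--\eqref{0.1.15} according to the parities of $l/m$ and $l/k$ with $l=\mathrm{lcm}(k,m)$ records which arcs of $\hat e_0$ are reconnected by the rotation $[\hat r_1\hat r_0\cdots]_{2k-1}$, and these are the most delicate of the checks.

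For stage (ii) I would run a rewriting argument on the monoid $\BrM(\ddI_2^n)$. Using $r_i^2=1$ together with the dihedral braid relation, any submonomial in $r_0,r_1$ alone reduces to one of the $2n$ elements of $W(\ddI_2^n)$; these account for the summand $2n$. For a monomial involving at least one $e_i$, the relations $r_ie_i=e_i$, \eqref{0.1.8}--\eqref{0.1.12}, \eqref{0.1.13}--\eqref{0.1.15} (resp.\ \eqref{0.2.4}--\eqref{0.2.7}) are used to absorb adjacent $r$'s into $e$'s, to collapse any subword of shape $e_i(\cdots)e_i$ or $e_1(\cdots)e_0(\cdots)e_1$ down to a single $e_i$ times an invertible power of $\delta$, and finally to bring the monomial into one of the forms $u\,e_i\,v$ or $u\,e_0\,e_1\,v$ with $u$ and $v$ ranging over bounded, explicitly described sets of dihedral words; counting the survivors yields the bound $n^2$ (resp.\ $\tfrac{3}{2}n^2$), the parity of $n$ entering through the asymmetry between $e_0$ and $e_1$ already visible in \eqref{0.1.8}--\eqref{0.1.9} and \eqref{0.2.6}.

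For stage (iii), the standard diagram basis of $\Br(\ddA_{n-1})$ from \cite{CFW2008} is the tool: one computes the Brauer diagram and $\delta$-coefficient of $\phi$ applied to each of the $N$ chosen monomials and verifies, by inspecting which pairs of the $2n$ endpoints are joined, that the resulting $N$ elements are pairwise distinct and $\Z[\delta^{\pm1}]$-linearly independent; combined with stage (ii) this pins the rank at exactly $N$ and forces injectivity of $\phi$. The main obstacle is stage (ii): proving that the relations of Definitions \ref{0.1} and \ref{0.2} genuinely suffice to rewrite an arbitrary monomial down to the $N$-element list with no further collapse. The interplay between the dihedral rewriting on the $r$'s and the loop-removing relations on the $e$'s --- above all in the range governed by \eqref{0.1.13}--\eqref{0.1.15} --- is where a miscount is easiest to make, and the explicit computation of stage (iii) is precisely the certificate that the spanning bound of stage (ii) is sharp. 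A subsidiary but unavoidable chore within stage (i) is the bookkeeping that assigns consistent integer values to every parameter in $\Theta$ across all the relations.
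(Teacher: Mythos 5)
Your three-stage plan---verify the defining relations on the diagram images inside $\Br(\ddA_{n-1})$, rewrite every monomial of $\BrM(\ddI_2^n)$ into a normal form whose count is $2n+n^2$ (odd $n$) or $2n+\tfrac{3}{2}n^2$ (even $n$), and then certify that the $\phi$-images of these normal forms are pairwise distinct Brauer diagrams---is exactly the structure of the paper's proof, and your identification of the three lcm-parity relations as the delicate point of the homomorphism check, as well as your forced value of $\kappa_i$, match the paper. The only devices you would still have to supply are implementations of steps you already name: the paper resolves the lcm-parity case analysis by an auxiliary ``reflecting particle'' lemma, and proves distinctness of the images by computing orbit sizes of admissible root sets (via root heights and Lagrange's theorem) rather than by direct diagram inspection.
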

 This paper is included as Chapter $5$ in the author's PhD thesis \cite{L2012}.
\end{section}
\section{An interesting elementary problem}
Suppose that $k$, $m\in \mathbb{N}$ are such that $1<2k\leq m$.
There is a box in the $x$, $y$ plane $\mathbb{R}^2$ fixed by four lines $x=1$, $x=2m$, $y=2k-\frac{1}{2}$, and $y=-\frac{1}{2}$.
Imagine you have a particle, which starts to move from $(1,2k-1)$ with slope $-1$;  when it touches the bottom (the top),
it will be reflected with the
bottom (the top) as a mirror; but when it touches the right (left) wall, it  first goes down (up)   1 unit vertically, if it comes at the wall from the top (bottom), and continues its path with the wall as the mirror;
it stops if it reaches  the points $(1, 0)$, $(2m,0)$, $(1,2k-1)$, or $(2m,2k-1)$.
For different values of $k$, $m$, the problem is to decide at which point the particle stops.
One example is Figure \ref{m5k2}, when $m=5$, $k=2.$

\begin{figure}[!htb]
\begin{center}
\includegraphics[width=.4\textwidth,height=.2\textheight]{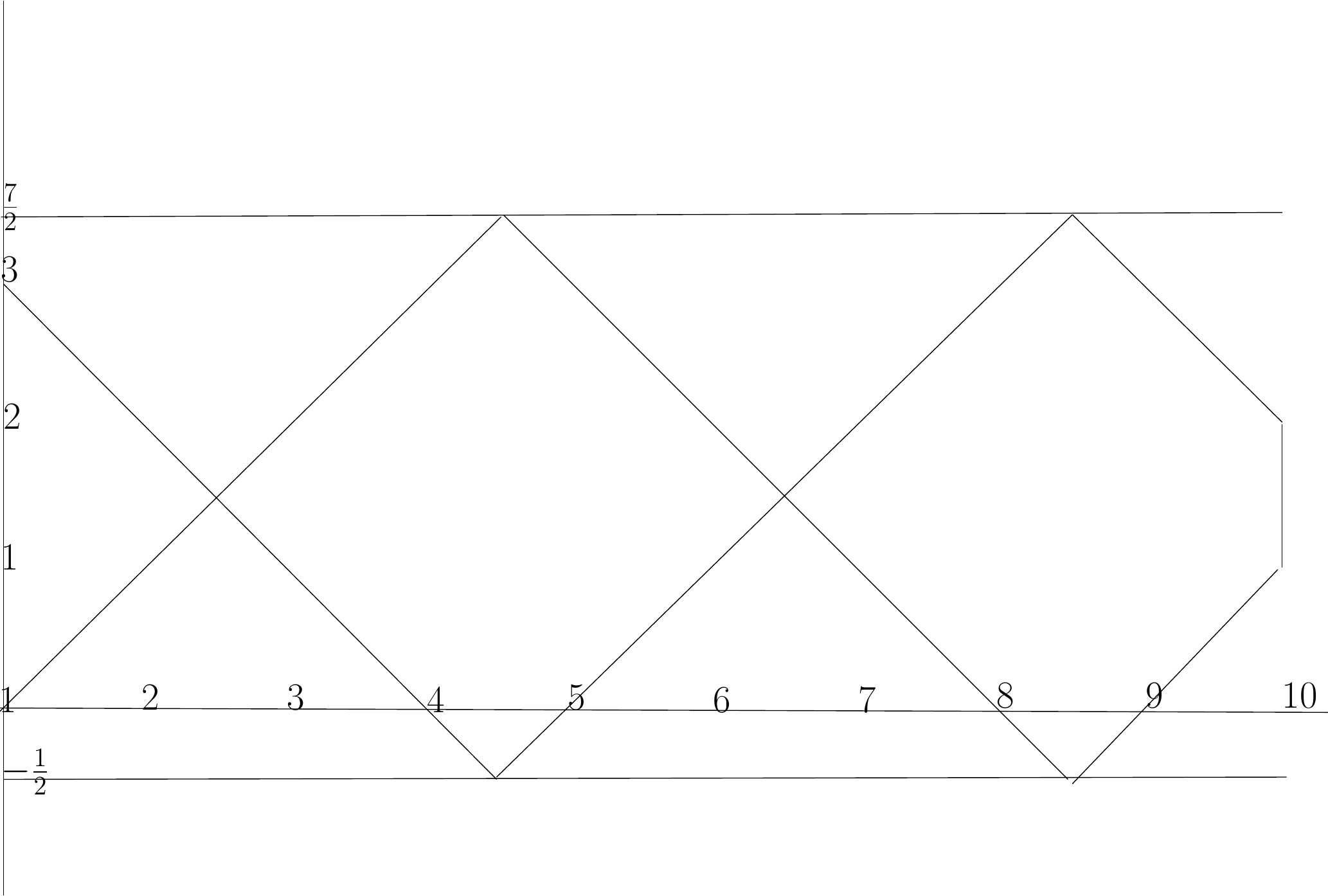}
\caption{case for $m=5$, $k=2$}\label{m5k2}
\end{center}
\end{figure}

To solve the problem, we  unfold its  path  by "penetrating" the walls,
which means that when the particle touches the right wall for the first time, we change the
vertical step into one move of slope $-1$ when coming from  the top (slope $1$ when coming from the bottom) with Euclid length $\sqrt{2}$, or, in other words, we do not change its moving at the wall, and we see that the path of the particle in the region between $x=2m+1$ and $x=4m$ is
just the reverse of the path of the particle when it goes from the right to the left for the first time. The algorithm can be similarly extended
at the left wall to make the unfolded path look like the graph of a function of a single variable.
 It can be verified that when  it passes the point with the $x$-coordinate being a multiple
of $2m$, the movement stops. It can be seen that before it stops, the path in $[2tm+1, 2(t+1)m]$ is just a copy
of a particle path in the above box of :
\begin{enumerate}[(i)]
\item the $\frac{t+2}{2}$th path from the left wall to the right wall if $t$ is even,
\item or $\frac{t+1}{2}$th path from the right wall to the the left wall if $t$ is odd.
\end{enumerate}
 Therefore
the above trick is just that  we  draw the picture on  folded paper, then  we  unfold this and see a simple picture in which the original
problem can be tackled.
Here is an example for $m=5$, $k=2$ in  Figure \ref{m5k2^2} being the unfolded case of Figure \ref{m5k2}.

\begin{figure}[!htb]
\begin{center}
\includegraphics[width=.8\textwidth,height=.2\textheight]{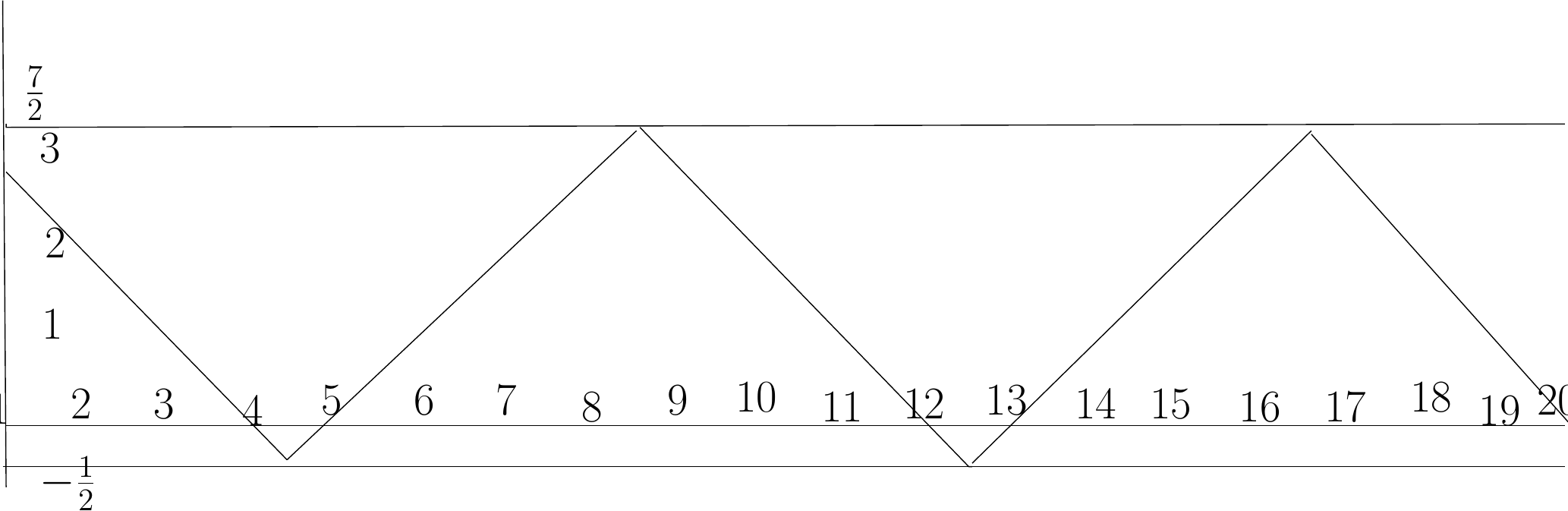}
\caption{the unfolded path  for  $m=5$, $k=2$}\label{m5k2^2}
\end{center}
\end{figure}
 \begin{lemma}\label{km} Let $l={\rm lcm}(k,m)$.
 The particle stops in the unfolded path when it moves $2l-1$ for its $x$-coordinate. Furthermore
 \begin{enumerate}[(i)]
 \item when $\frac{l}{m}$ is even, the particle stops at $(1,0)$;
  \item when $\frac{l}{m}$ and  $\frac{l}{k}$ are odd,  the particle stops at $(2m,0)$;
  \item when $\frac{l}{m}$ is odd and  $\frac{l}{k}$ is even, the particle stops at $(2m,2k-1)$.
  \end{enumerate}
 \end{lemma}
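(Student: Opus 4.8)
The plan is to make the ``unfolding'' sketched just before the statement into a precise reduction: it turns the geometric question into a congruence, which is then settled by a one-line computation with powers of $2$.

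First I would straighten the trajectory rigorously. Parametrise the particle by the horizontal distance $s\ge 0$ it has travelled, so that its unfolded position is $(1+s,\,y(s))$ with $y(0)=2k-1$. The key claim is that $y(s)=T(s)$, where $T$ is the triangle wave of period $4k$ with $T(0)=2k-1$, decreasing on $[0,2k-1]$, equal to $0$ exactly on the residues $s\equiv 2k-1,\,2k\pmod{4k}$ and equal to $2k-1$ exactly on $s\equiv 0,\,-1\pmod{4k}$; the two ``plateaux'' encode the fact that the mirrors $y=-\tfrac12$ and $y=2k-\tfrac12$ sit half a unit outside the extreme lattice rows $y=0$ and $y=2k-1$. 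Away from the side walls this is immediate. At a side wall the particle, by hypothesis, steps one unit \emph{down} if it arrives moving downward and one unit \emph{up} if it arrives moving upward; once that vertical step is replaced by the diagonal step of slope $\mp1$ used in the unfolding, $y$ simply continues along the same branch of $T$, so there is no phase slip, and the only configuration in which ``moving downward/upward'' would be ambiguous --- arriving at a wall with $y\in\{0,2k-1\}$ --- is precisely the configuration in which the particle has already stopped. Bookkeeping the horizontal lengths then shows that the side walls are met at $s=2jm-1$, $j\ge1$ (the first leg, from the start to the first right wall, has length $2m-1$, and every later leg has length $2m$, the extra unit being the one gained when a wall is penetrated), and that $s=2jm-1$ corresponds, back in the box, to the right wall $x=2m$ when $j$ is odd and to the left wall $x=1$ when $j$ is even.

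Next I would translate the stopping rule: the particle halts at the least $j\ge1$ with $T(2jm-1)\in\{0,2k-1\}$. From the shape of $T$, $T(2jm-1)=2k-1$ forces $2jm-1\equiv 0$ or $-1\pmod{4k}$, and as $2jm-1$ is odd only $2jm-1\equiv-1$, i.e.\ $2k\mid jm$, can hold; similarly $T(2jm-1)=0$ forces $2jm-1\equiv 2k-1$ or $2k\pmod{4k}$, and only $2jm-1\equiv 2k-1$, i.e.\ $jm\equiv k\pmod{2k}$, can hold. In either case $k\mid jm$, so $l=\mathrm{lcm}(k,m)$ divides $jm$, so $\tfrac{l}{m}\mid j$, whence $j\ge\tfrac{l}{m}$; and $j=\tfrac{l}{m}$ works, since $l/k$ is a positive integer, $2k\mid l$ exactly when $l/k$ is even, and $l\equiv k\pmod{2k}$ exactly when $l/k$ is odd. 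Hence the least admissible $j$ is $\tfrac{l}{m}$, and the particle stops after moving $s=2l-1$ in the $x$-direction, which is the first assertion of the lemma.

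Finally I would locate the stopping point for $j=\tfrac{l}{m}$: it lies on the left wall $x=1$ if $j$ is even and on the right wall $x=2m$ if $j$ is odd, with $y=2k-1$ when $l/k$ is even and $y=0$ when $l/k$ is odd. Writing $k=2^{a}k'$ and $m=2^{b}m'$ with $k',m'$ odd, one has $l/m$ even $\iff a>b$, and then $l/k$ is odd, so the stop is $(1,0)$, giving (i); $l/m$ and $l/k$ both odd $\iff a=b$, so $j$ is odd and $y=0$, the stop $(2m,0)$ of (ii); and $l/m$ odd with $l/k$ even $\iff b>a$, so $j$ is odd and $y=2k-1$, the stop $(2m,2k-1)$ of (iii). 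Since the cases $a>b$, $a=b$, $b>a$ are exhaustive (and $l/m,l/k$ cannot both be even), these are exactly (i)--(iii). I expect the only real difficulty to be the first step: verifying that the unfolding genuinely presents the trajectory as the graph of $T$ with no phase slip at the walls. Everything after that is the short arithmetic above.
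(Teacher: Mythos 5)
Your proof is correct and follows essentially the same route as the paper's: unfold the trajectory into a triangle wave of period $4k$, observe that the walls are met at $s=2jm-1$, and reduce the stopping condition to a congruence modulo $2k$, settled by comparing the $2$-adic valuations of $k$ and $m$. The paper's own proof is a terse sketch of exactly this argument (it completes half-periods by adding half-units at the two ends so that each top-to-bottom traverse has horizontal length $2k$, then appeals to ``basic number theory''), so your write-up mainly supplies the details the paper leaves to the reader.
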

 \begin{proof} By prolonging the path  at the beginning and the ending, respectively,  by half unit for $x$-coordinate to complete a period,
  we can consider the particle starting
 from the top and stopping at the top or the bottom.
 By observing the unfolded path of the particle,  each time it goes from the top ceiling to the bottom ground or from the bottom to
 the top, the $x$-coordinate is increased by $2k$, so the first conclusion follows naturally. The
other two conclusions hold easily by basic number theory knowledge about congruence. Furthermore, elementary number theory tells us that
 $\frac{l}{m}$ and  $\frac{l}{k}$ can not both be even, which implies that the particle never stops at $(1, 2k-1)$.
 \end{proof}

\section{The map $\phi$ inducing a homomorphism}

In order to avoid confusion
with the above generators, the symbols of \cite{CFW2008} have been capitalized.

\begin{defn}\label{1.1}
Let $Q$ be a graph. The Brauer monoid $\BrM(Q)$ is the monoid
generated by the symbols $R_i$ and $E_i$, for  each node $i$ of $Q$ and $\delta$,
$\delta^{-1}$ subject to the following relation, where
$\sim$ denotes adjacency between nodes of $Q$.

\begin{equation}\delta\delta^{-1}=1     \label{1.1.1}
\end{equation}
\begin{equation}R_{i}^{2}=1          \label{1.1.2}
\end{equation}
\begin{equation}R_iE_i=E_iR_i=E_i     \label{1.1.3}
\end{equation}
\begin{equation}E_{i}^{2}=\delta E_{i}   \label{1.1.4}
\end{equation}
\begin{equation}R_iR_j=R_jR_i, \,\, \mbox{for}\, \it{i\nsim j} \label{1.1.5}
\end{equation}
\begin{equation}E_iR_j=R_jE_i,\,\, \mbox{for}\, \it{i\nsim j}  \label{1.1.6}
\end{equation}
\begin{equation}E_iE_j=E_jE_i,\,\, \mbox{for}\, \it{i\nsim j}    \label{1.1.7}
\end{equation}
\begin{equation}R_iR_jR_i=R_jR_iR_j, \,\, \mbox{for}\, \it{i\sim j}  \label{1.1.8}
\end{equation}
\begin{equation}R_jR_iE_j=E_iE_j ,\,\, \mbox{for}\, \it{i\sim j}       \label{1.1.9}
\end{equation}
\begin{equation}R_iE_jR_i=R_jE_iR_j ,\,\, \mbox{for}\, \it{i\sim j}     \label{1.1.10}
\end{equation}
The Brauer algebra $\Br(Q)$ is  the the free $\Z$-algebra for Brauer monoid  $\BrM(Q)$.
 \end{defn}

In \cite{Brauer1937}, Brauer gives a diagram description for a basis of
Brauer monoid of type $\ddA_t$, which is a monoid consisting of diagrams with $2t+2$
dots and $t+1$ strands, where each dot is connected by a unique strand to another dot.

Here we suppose the $2t+2$ dots have coordinates $(i, 0)$ and $(i,1)$ in $\R^{2}$ with
 $1\leq i\leq t+1$. The multiplication of two diagrams is given by concatenation, where any closed loops formed are replaced by a factor of $\delta$. Henceforth, we identify $\Br(\ddA_{t})$ with its
 diagrammatic version. It is a free algebra over $\Z[\delta^{\pm1}]$ of rank $(t+1)!!$, the product of the first $t+1$ positive odd integers.

 we revise the root system of the Coxeter group of type $\ddA_t$, focussing on special collections of mutually orthogonal positive roots called admissible sets.  Also, the notion of height for elements of the Brauer algebra $Br(\ddA_{t})$ is introduced and discussed.

\begin{defn}\label{defn:Phi}
Let $t\ge1$.  The root system of the Coxeter group $W(\ddA_{t})$ of type
$\ddA_{t}$ is denoted by $\Phi$.  It is realized as $\Phi :=
\{\eps_i-\eps_j\mid 1\le i,j\le t+1,\ i\ne j\}$ in the Euclidean space
$\R^{t+1}$, where $\eps_i$ is the $i^\mathrm{th}$ standard basis vector.  Put $\alp_i
:= \eps_i-\eps_{i+1}$.  Then $\{\alpha_{i}\}_{i=1}^{t}$ is called the set of
simple roots of $\Phi$.  Denote by $\Phi^+$ the set of positive roots in
$\Phi$ with respect to these simple roots; that is, $\Phi^+ := \{\eps_i-\eps_j\mid 1\le i<j\le t+1\}$.
\end{defn}

We have seen that, up to powers of $\delta$, the monomials of $\Br(\ddA_t)$
correspond to Brauer diagrams. In order to work with the tops and bottoms of
Brauer diagrams, we introduce the following notion.

\begin{defn}
\label{df:cA}
Let $\cA$ denote the collection of all subsets of $\Phi^+$ containing
mutually orthogonal positive roots.
Members of $\cA$ are called \emph{admissible sets}.
\end{defn}

An admissible set $B$ corresponds to a Brauer diagram top in the
following way: for each $\b\in B$, where $\b = \eps_i-\eps_j$ for some $i,j\in\{1,\ldots,t+1\}$, draw a horizontal strand in the corresponding Brauer diagram top from the dot $(i,1)$ to the dot $(j,1)$. All
horizontal strands on the top are obtained this way, so there are precisely
$|B|$ horizontal strands.

For any $\beta\in\Phi^+$ and $i\in\{1,\ldots,t\}$, there exists a $w\in W(\ddA_t)$ such
that $\beta = w\alpha_i$. Then $E_\beta := wE_iw^{-1}$ is well defined
(see \cite{CFW2008}).
If $\beta,\gamma\in\Phi^+$ are mutually orthogonal, then $E_\beta$ and
$E_\gamma$ commute (see Lemma 4.3 of \cite{CFW2008}). Hence, for $B\in\cA$, we can define
$$\hE_B := \delta^{-|B|}\prod_{\beta\in B} E_\beta.$$
This is an idempotent element of the Brauer monoid.

In \cite{CFW2008}, an action of the Brauer monoid
$\BrM(\ddA_{t})$ on the collection $\cA$ is defined as follows.
The generators $\{R_{i}\}_{i=1}^{t}$ act
by the natural action of Coxeter group elements on its
root sets, where negative roots are negated so as to obtain positive roots,
and the action of $\{E_{i}\}_{i=1}^{t}$ is defined below.
\begin{equation}
E_i B :=\begin{cases}
B & \text{if}\ \alpha_i\in B, \\
B\cup \{\alpha_{i}\} & \text{if}\ \alpha_i\perp B,\\
R_\beta R_i B & \text{if}\ \beta\in B-\alpha_{i}^{\perp}
\end{cases}
\end{equation}

Alternatively, this action can be described as follows: complete
the top corresponding to $B$ into a Brauer diagram $b$, without increasing the number of horizontal srands at the top.
Now $aB$ is the top of
the Brauer diagram $ab$.  We will make use of this action in order to
provide a normal form for elements of $\BrM(\ddA_t)$.
\begin{rem}\label{rem:opp}
There is an anti-involution on $\Br(\ddA_t)$ determined by
$$\gamma_1\cdots\gamma_t\ \mapsto\ \gamma_t\cdots\gamma_1$$
on products of generators of $\Br(\ddA_n)$. We denote it by $x\mapsto x^{\op}.$
\end{rem}
\begin{lemma} The map $\phi$ defined on the generators in Theorem \ref{main} induces a $\Z[\delta^{\pm 1}]$-algebra homomorphism.
\end{lemma}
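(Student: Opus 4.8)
The plan is to verify that $\phi$ respects every defining relation of $\Br(\ddI_2^{n})$ (Definitions \ref{0.1} and \ref{0.2}), so that it extends to an algebra homomorphism by the universal property of a presentation. Since $\phi(r_0)$ and $\phi(r_1)$ are products of commuting generators $R_i$ (the even-indexed and odd-indexed nodes of $\ddA_{n-1}$ respectively, which are pairwise non-adjacent), and similarly for $\phi(e_0)$, $\phi(e_1)$, the relations \eqref{0.1.3}--\eqref{0.1.5} and \eqref{0.2.3}--\eqref{0.2.5} reduce immediately to relations \eqref{1.1.2}--\eqref{1.1.4} together with the commutativity relations \eqref{1.1.5}--\eqref{1.1.7}: indeed $\phi(r_i)^2 = \prod R_j^2 = 1$, $\phi(r_i)\phi(e_i) = \prod R_jE_j = \prod E_j = \phi(e_i)$, and $\phi(e_i)^2 = \prod E_j^2 = \delta^{\kappa_i}\phi(e_i)$ once $\kappa_i$ is set to the number of nodes of the relevant parity. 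The braid-type relations \eqref{0.1.20} and \eqref{0.2.8} follow because $[r_0r_1\cdots]_{2m}$ and $[r_1r_0\cdots]_{2m}$ both map to the longest element of the parabolic $W(\ddA_{n-1})$ generated along a path, and the type-$\ddA$ braid relations \eqref{1.1.8} force these to coincide; this is exactly the classical fact (M\"uhlherr) that the twisted generators satisfy the $\ddI_2^n$ Coxeter relation.

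The substantive work is with the relations mixing $e$'s and $r$'s, above all \eqref{0.1.10}--\eqref{0.1.15} and \eqref{0.2.7}, which is where the ``elementary problem'' of Section 2 enters. The key idea is to translate each monomial $e_0[r_1r_0\cdots]_{2k-1}e_0$ (and its siblings) under $\phi$ into a Brauer diagram of type $\ddA_{n-1}$ and to compute its top, bottom and loop count using the action of $\BrM(\ddA_{n-1})$ on admissible sets recalled above. Here is the dictionary I would set up: $\phi(e_0)$ corresponds to the admissible set $\{\alpha_i : i \text{ even}\}$ and $\phi(e_1)$ to $\{\alpha_i : i \text{ odd}\}$; conjugating by the word $[r_1r_0\cdots]_{2k-1}$ permutes coordinates, and tracking where the horizontal strands of $\hE_{B}$ go under this permutation is precisely the trajectory of the reflecting particle, with the ``penetrate the wall, drop one unit'' rule encoding how a strand that would exit the range $\{1,\dots,n\}$ gets folded back. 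Lemma \ref{km} then tells us, in terms of the parity of $l/m$ and $l/k$ where $l = \operatorname{lcm}(k,m)$, the three possible outcomes: the strand configuration of $\phi(e_0[r_1r_0\cdots]_{2k-1}e_0)$ agrees with that of $\phi([r_1r_0\cdots]_{2m-1}e_0)$, of $\phi(e_0)$, or of $\phi(e_0e_1e_0)$ respectively, matching \eqref{0.1.13}--\eqref{0.1.15} exactly; the accumulated power of $\delta$ coming from closed loops and from coincident strands is then \emph{defined} to be $\xi_k$ (and likewise $\theta_k$, $\eta_k$ are read off from \eqref{0.1.10}--\eqref{0.1.12}). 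Relations \eqref{0.1.6}--\eqref{0.1.9} and \eqref{0.2.6} are handled the same way but are easier: \eqref{0.1.6} is a single-loop computation $\phi(e_1)\phi(e_0)\phi(e_1) = \delta\,\phi(e_1)$ from relation \eqref{1.1.9} applied nodewise, and \eqref{0.1.7}--\eqref{0.1.9}, \eqref{0.2.6} express the commutation of $\phi(e_i)$ past the longest parabolic word, which follows since that word normalizes the relevant set of nodes.

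The main obstacle I anticipate is the bookkeeping in \eqref{0.1.13}--\eqref{0.1.15}: one must check not only that the \emph{top} admissible set of $\phi(e_0 w e_0)$ is correct, but that the \emph{bottom} set and the number of through-strands also match the claimed right-hand side, and that the exponent $\xi_k$ extracted this way is a well-defined natural number independent of how the word is bracketed — in particular one should confirm the count of closed loops matches on both sides of each identity, not merely the diagram shape. This is exactly the point at which the clean three-case answer of Lemma \ref{km} (and the parenthetical remark there that $l/m$ and $l/k$ cannot both be even) is needed, so that no fourth case stopping at $(1,2k-1)$ occurs and the list of relations \eqref{0.1.13}--\eqref{0.1.15} is exhaustive. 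Once all the generating relations are checked, the universal property of the presentation in Definition \ref{0.1} (resp. \ref{0.2}) gives the homomorphism, with the elements of $\Theta$ now pinned down to the specific natural numbers dictated by the loop counts; this is the ``special value in $\N$'' clause of Theorem \ref{main}.
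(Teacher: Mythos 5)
Your proposal is correct and follows essentially the same route as the paper: the easy relations are checked by commutativity of non-adjacent generators and direct Brauer-diagram computations, the Coxeter relation is M\"uhlherr's twisting, the conjugation relations \eqref{0.1.7}--\eqref{0.1.9} and \eqref{0.2.6} come from tracking the permutation action on the simple roots, and the hard relations \eqref{0.1.13}--\eqref{0.1.15} are resolved by identifying the trajectory of the single undetermined strand with the reflecting particle of Section 2 and invoking Lemma \ref{km}, with the exponents in $\Theta$ read off from the resulting loop counts. Your observation that only one of the four boundary strands needs to be traced (the rest being forced) and that the three cases of Lemma \ref{km} are exhaustive is exactly the paper's argument.
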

\begin{proof} We deal first with $n$ is odd.
In Definition \ref{0.2}, the nontrivial relations to be verified are  just (\ref{0.2.6}), (\ref{0.2.7}), and (\ref{0.2.8}).
By \cite{M1993}, the relation (\ref{0.2.8}) holds for the inspection of the images of  generators.
The relation (\ref{0.2.7}) follows by observing the diagrams in $\Br(\ddA_{2m-2})$ because the top and the bottom are fixed, both of which have just one point having a vertical strand.
By  diagram we see that  the action of $\phi([r_1r_0r_1\ldots]_s)$ in  $W(\ddA_{2m-2})$ (which we identify with $\rm{Sym}_{2m-1}$)
on a number $1\leq a\leq 2m-1$, for $s\leq 2m-2$, is given by
 \begin{equation}\label{odds}\phi([r_1r_0r_1\ldots]_s)(a)=\begin{cases}
a+s,\, a\, {\rm odd},\, a+s\leq 2m-1\\
a-s, \, a\, {\rm even},\, a-s\geq 1\\
4m-1-a-s,\,a\,{\rm odd},\, a+s>2m-1\\
1+s-a, \, a\, {\rm even},\, a-s< 1\\      \end{cases}
\end{equation}
Then  for $t>0$,
$$\phi([r_1r_0r_1\ldots]_{2m-2})(2t)=2m-1-2t,$$
$$ \phi([r_1r_0r_1\ldots]_{2m-2})(2t+1)=2m-1-2t+1. $$
Therefore
$$\phi([r_1r_0r_1\ldots]_{2m-2}^{-1})\alpha_{2t}=\alpha_{2m-1-2t},$$
$$\phi([r_1r_0r_1\ldots]_{2m-2}^{-1})\{\alpha_{2t}\}_{t=1}^{m-1}=\{\alpha_{2t-1}\}_{t=1}^{m-1},$$
which implies that $\phi([r_1r_0r_1\ldots]_{2m-2}^{-1})\phi(e_0)\phi([r_1r_0r_1\ldots]_{2m-2})=\phi(e_1)$ and
(\ref{0.2.6}) holds for the generator images under $\phi$.

Now consider $\phi$ when $n=2m>5$ even.  The fact that (\ref{0.1.3})--(\ref{0.1.6}), and (\ref{0.1.10})--(\ref{0.1.20}) still hold for the generator images under
$\phi$ can be proved easily by Brauer diagrams. As above we see that
\begin{equation}\label{even10s}\phi([r_1r_0r_1\ldots]_s)(a)=\begin{cases}
a+s,\, a\, {\rm odd},\, a+s\leq 2m-1\\
a-s, \, a\, {\rm even},\, a-s\geq 1\\
4m+1-a-s,\,a\,{\rm odd},\, a+s>2m-1\\
1+s-a, \, a\, {\rm even},\, a-s< 1,\\      \end{cases}\end{equation}
\begin{equation}\label{even01s}\phi([r_0r_1r_0\ldots]_s)(a)=\begin{cases}
a+s,\, a\, {\rm even},\, a+s\leq 2m-1\\
a-s, \, a\, {\rm odd},\, a-s\geq 1\\
4m+1-a-s,\,a\,{\rm even},\, a+s>2m-1\\
1+s-a, \, a\, {\rm odd},\, a-s< 1.\\      \end{cases}\end{equation}
By diagram inspection, we see that
$$\phi([r_1r_0r_1\ldots]_{2m-1}^{-1})(\alpha_{2t})=\alpha_{2m-2t}, $$
$$\phi([r_1r_0r_1\ldots]_{2m-1}^{-1})(\{\alpha_{2t}\}_{t=1}^{m-1})=\{\alpha_{2t}\}_{t=1}^{m-1},$$
hence $\phi([r_1r_0r_1\ldots]_{2m-1})\phi(e_0)\phi([r_1r_0r_1\ldots]_{2m-1})=\phi(e_0)$; therefore
relation (\ref{0.1.7}) holds for the images of  the generators  under $\phi$.
On the other hand,
$$\phi([r_0r_1r_0\ldots]_{2m-1}^{-1})(\alpha_{2t-1})=\alpha_{2m-2t+1}, $$
$$\phi([r_0r_1r_0\ldots]_{2m-1}^{-1})(\{\alpha_{2t-1}\}_{t=1}^{m})=\{\alpha_{2t}\}_{t=1}^{m-1}.$$
Just observing  Brauer diagrams, (\ref{0.1.8}) and (\ref{0.1.9}) hold  for the images of  the generators  under $\phi$.

As for (\ref{0.1.13})--(\ref{0.1.15}),
 consider  the top and the bottom of the diagram of the left under $\phi$; both  the top and the bottom have horizontal the same strands among
 those points $\{(i,0)\}_{i=2}^{2m-1}\cup \{(i,1)\}_{i=2}^{2m-1}$ as $\phi(e_0)$.
  Except those $2m-2$ strands in the top
 and in the bottom fixed for $\phi(e_0)$, there are still  two strands of the left side under $\phi$ unknown.  Those two strands are between  the remaining four points, $(0,1)$, $(0, 2m)$, $(1,1)$ and $(1,2m)$.
 If we find another end of the strand from $(1,1)$, the other strand is fixed as a consequence.
 The strands starting from $(1,1)$ in the images of under $\phi$ the right hand sides of   (\ref{0.1.13})--(\ref{0.1.15})
  are ended at $(0,2m)$, $(0,1)$ and $(1,2m)$, respectively.
  By observation,  we can transform this equality problem to the elementary  problem  solved at the beginning of this section in the following way.
Consider the paths of a particle starting from $(1,1)$ in the diagram of the left hand sides of the images under $\phi$ of (\ref{0.1.13})--(\ref{0.1.15}) with the $m-1$ horizontal strands at the top and the
$m-1$ horizontal strands at the bottom removed and transform the horizontal strands as in Figure \ref{trhost}.
  We give an example for this in Figure \ref{I28} for $\phi(e_0)\phi(r_1)\phi(r_0)\phi(r_1)\phi(e_0)$ in $\Br(\ddA_7)$.
   By observation,
   Lemma \ref{km} can be applied here, and gives that  the three equalities hold under
 $\phi$ acting on both sides.
  \begin{figure}[!htb]
\includegraphics[width=.8\textwidth,height=.2\textheight]{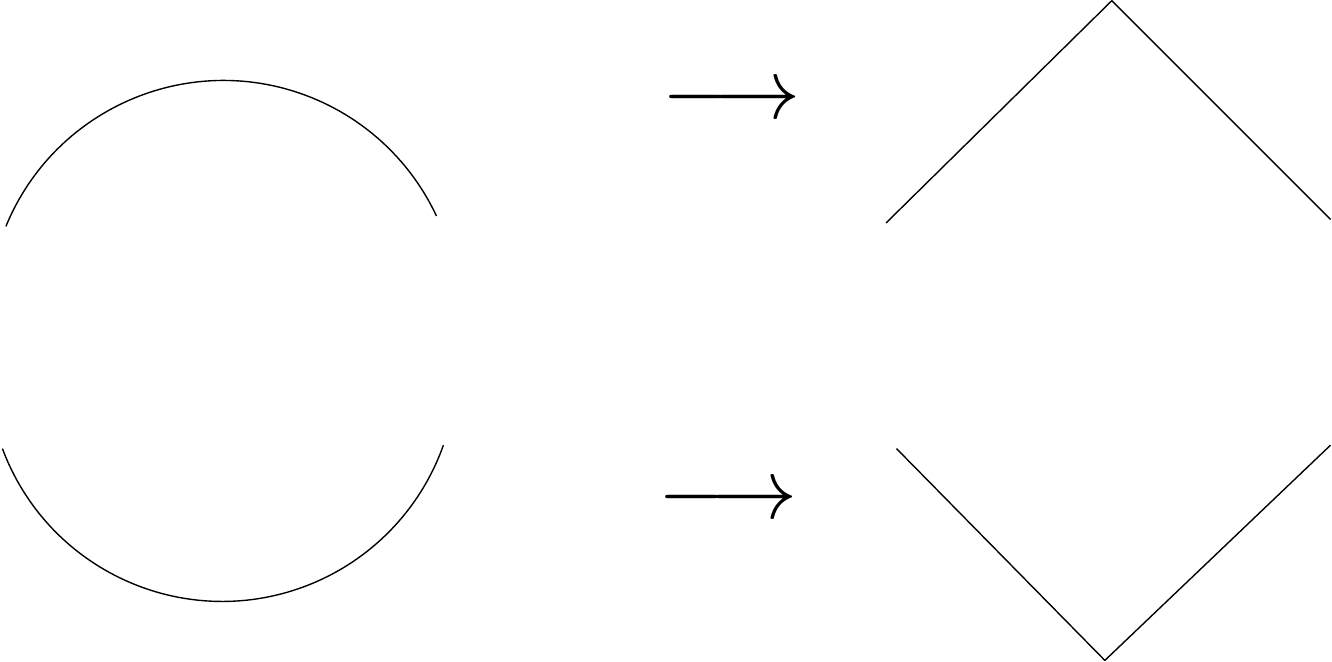}
\caption{transformation of horizontal strands}\label{trhost}
\end{figure}
 \begin{figure}[!htb]
\includegraphics[width=.8\textwidth,height=.3\textheight]{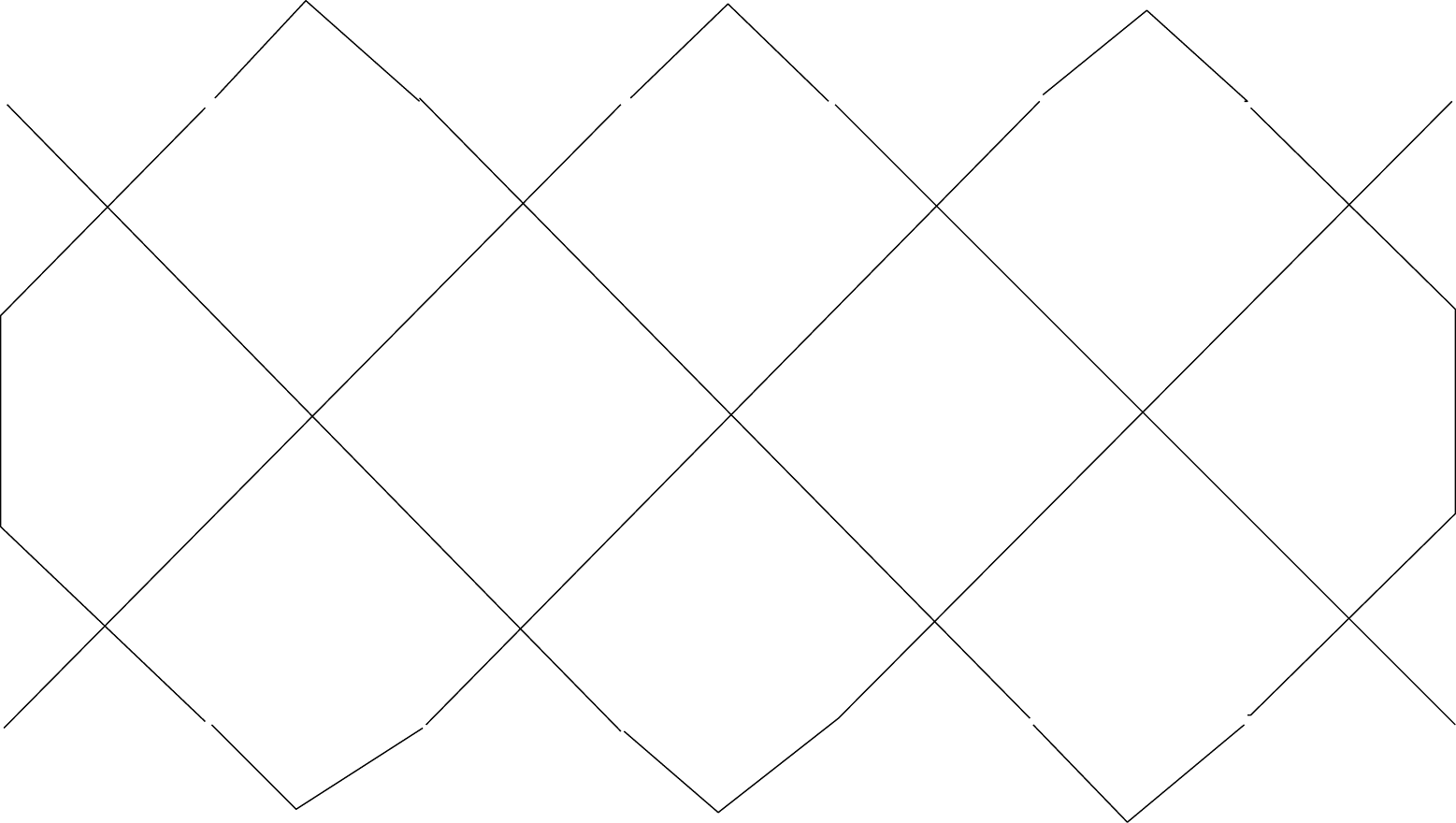}
\caption{$\phi(e_0)\phi(r_1)\phi(r_0)\phi(r_1)\phi(e_0)$ for $\ddI_2^8$}\label{I28}
\end{figure}
\end{proof}

\section{Normal forms for $\BrM(\ddI_2^n)$}
\begin{lemma} The submonoid generated by $r_0$ and $r_1$ in $\BrM(\ddI_2^n)$ is
isomorphic to $W(\ddI_2^n)$.
\end{lemma}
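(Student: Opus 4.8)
The plan is to squeeze $M:=\langle r_0,r_1\rangle$ between two copies of $W:=W(\ddI_2^n)$. First, $r_0^2=r_1^2=1$ by (\ref{0.1.3}) when $n$ is even and by (\ref{0.2.3}) when $n$ is odd, so every word in $r_0,r_1$ is invertible in $\BrM(\ddI_2^n)$; hence $M$ is a group, and being generated by two involutions it is automatically dihedral, with $|M|=2\cdot\ord(r_0r_1)$ (infinite order not excluded a priori). Feeding $r_i^2=1$ into relation (\ref{0.1.20}) when $n=2m$ turns it into $(r_1r_0)^m=(r_0r_1)^m$, so $(r_0r_1)^{2m}=1$; feeding it into relation (\ref{0.2.8}) when $n=2m-1$ turns it into $r_0(r_1r_0)^{m-1}=r_1(r_0r_1)^{m-1}$, and left-multiplying by $r_0$ yields $(r_0r_1)^{2m-1}=1$. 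In both cases $(r_0r_1)^n=1$, so $r_0$ and $r_1$ satisfy the Coxeter relations of $W$; there is therefore a surjective group homomorphism $\psi\colon W\twoheadrightarrow M$, and in particular $|M|\le 2n$.

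For the reverse estimate I would transport $M$ through the map $\phi$. By the previous section $\phi$ is a $\Z[\delta^{\pm 1}]$-algebra homomorphism $\Br(\ddI_2^n)\to\Br(\ddA_{n-1})$, hence sends $\BrM(\ddI_2^n)$ into $\BrM(\ddA_{n-1})$; and by their definition in Theorem \ref{main}, $\phi(r_0)$ and $\phi(r_1)$ are products of pairwise disjoint transpositions in the Coxeter group $W(\ddA_{n-1})$, which is identified with the symmetric group on $\{1,\dots,n\}$ as in the previous section. Specializing the permutation formulas (\ref{odds}) (for $n$ odd) and (\ref{even10s}) (for $n$ even) to the word of length $2$ shows that $\phi(r_1r_0)$ is a single $n$-cycle on $\{1,\dots,n\}$; this is the familiar realization of $W(\ddI_2^n)$ as a subgroup of $W(\ddA_{n-1})$ underlying \cite{M1993}. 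Since $\phi$ is a homomorphism, any relation $(r_0r_1)^j=1$ in $M$ forces $\phi(r_1r_0)^{j}=1$, hence $n\mid j$; therefore $\ord(r_0r_1)\ge n$.

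Putting the two bounds together, $\ord(r_0r_1)=n$ and $|M|=2n=|W|$, so the surjection $\psi\colon W\twoheadrightarrow M$ between finite groups of equal order is an isomorphism, i.e.\ $M\cong W(\ddI_2^n)$. The argument is essentially formal; the one genuinely computational point is the verification that $\phi(r_1r_0)$ is an $n$-cycle, and the only thing to watch is that both the relation forcing $(r_0r_1)^n=1$ and the relevant permutation formula look different in the two parities $n=2m$ and $n=2m-1$, so the cases should be treated separately.
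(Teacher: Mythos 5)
Your argument is correct, but it takes a genuinely different route from the paper's. You squeeze $M=\langle r_0,r_1\rangle$ between two bounds: the upper bound comes from extracting the braid relation $(r_0r_1)^n=1$ out of (\ref{0.1.20}) (for $n=2m$) and (\ref{0.2.8}) (for $n=2m-1$), which gives a surjection $W(\ddI_2^n)\rightarrow M$; the lower bound comes from pushing $r_0r_1$ through $\phi$ into $\Br(\ddA_{n-1})$ and checking, via the permutation formulas (\ref{odds}) and (\ref{even10s}) at $s=2$, that its image is an $n$-cycle in the symmetric group on $n$ letters, so $\ord(r_0r_1)\geq n$. The paper instead uses a retraction entirely inside $\Br(\ddI_2^n)$: the chain
$$\Z[\delta^{\pm 1}](W(\ddI_2^{n}))\rightarrow \Br(\ddI_2^n)\rightarrow\Br(\ddI_2^n)/(e_0, e_1)\rightarrow\Z[\delta^{\pm 1}](W(\ddI_2^{n}))$$
composes to the identity, which forces the first map to be injective on the submonoid generated by $r_0,r_1$. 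Note that the well-definedness of the paper's first arrow is exactly your computation that $(r_0r_1)^n=1$ holds in $\Br(\ddI_2^n)$, so that step is common to both proofs; where they differ is in how injectivity is obtained. The paper's quotient argument is shorter and independent of $\phi$ (it only needs the observation that the defining relations not involving $e_0,e_1$ are precisely the Coxeter relations, so the quotient by the ideal $(e_0,e_1)$ is the group algebra), whereas your argument leans on the homomorphism established in the previous section and replaces the quotient by a single concrete permutation computation; both are legitimate, and yours has the mild advantage of simultaneously showing that $\phi$ restricted to $\langle r_0,r_1\rangle$ is injective, a fact the paper needs anyway when counting ranks later.
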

\begin{proof} The lemma follows from  the natural homomorphisms chain below.
$$\Z[\delta^{\pm 1}](W(\ddI_2^{n}))\rightarrow \Br(\ddI_2^n)\rightarrow\Br(\ddI_2^n)/(e_0, e_1)\rightarrow\Z[\delta^{\pm 1}](W(\ddI_2^{n})).$$
The composition is the identity and so the lemma follows.
\end{proof}
From now on, we do not distinguish the $W(\ddI_2^{n})$ in $\BrM(\ddI_2^n)$
and its image under $\psi$.\\
Analogue to Remark \ref{rem:opp}, we can also define an anti-involution $\Br(\ddI_2^n)$ denoted by $x\mapsto x^{\rm op}$.
\begin{prop} \label{prop:opp}
The natural anti-involution above induces an automorphism of the $\Z[\delta^{\pm 1}]$-algebra $\Br(\ddI_2^n)$.
\end{prop}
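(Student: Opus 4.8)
The plan is to check that the rule $x \mapsto x^{\op}$, defined on monomials in the generators $r_0,r_1,e_0,e_1$ (and $\delta^{\pm1}$) by reversing the order of the factors, is well defined on $\Br(\ddI_2^n)$ and then observe that it is manifestly an anti-automorphism of algebras; composing it with nothing — i.e.\ simply noting it is a bijective anti-homomorphism — gives what is claimed once we reinterpret ``anti-involution inducing an automorphism'' correctly, namely that $x\mapsto (x^{\op})$ followed by the obvious identification $\Br(\ddI_2^n)^{\op}\cong\Br(\ddI_2^n)$ is an algebra isomorphism. The only real content is well-definedness: we must verify that each defining relation in Definition \ref{0.1} (for $n=2m$) and Definition \ref{0.2} (for $n=2m-1$), when its two sides are reversed, is again a consequence of the defining relations. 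Equivalently, writing $F$ for the free $\Z[\delta^{\pm1}]$-algebra on $r_0,r_1,e_0,e_1,\delta^{-1}$ and $\rho\colon F\to F$ for the reversal anti-automorphism, we must show $\rho$ maps the two-sided ideal $I$ of relations into itself; then $\rho$ descends to $\Br(\ddI_2^n)=F/I$, and since $\rho^2=\mathrm{id}$ on $F$ it descends to an involutive anti-automorphism.

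First I would record that reversal fixes the relations $r_i^2=1$, $e_i^2=\delta^{\kappa_i}e_i$, $e_1e_0e_1=\delta e_1$ (palindromic), and sends $r_ie_i=e_i$ to $e_ir_i=e_i$ and vice versa, so (\ref{0.1.3})--(\ref{0.1.6}) are stable, and similarly (\ref{0.2.3})--(\ref{0.2.5}). Next, the braid-type relation (\ref{0.1.20}) (resp.\ (\ref{0.2.8})) is stable because reversing $[r_1r_0\cdots]_{2m}$ yields $[r_0r_1\cdots]_{2m}$ when $2m$ is even — one checks the parity bookkeeping of which generator ends the word — and likewise for the odd-length version. For the relations of the form $e_0[r_1r_0\cdots]_{2k}e_1 = \delta^{\theta_k}e_0e_1$, reversal gives $e_1[r_0r_1\cdots]_{2k}e_0 = \delta^{\theta_k}e_1e_0$, which is exactly (\ref{0.1.11}); thus (\ref{0.1.10}) and (\ref{0.1.11}) are swapped into each other, and (\ref{0.1.12}) is palindromic up to reversing the interior alternating word, which again lands on a relation of the same shape. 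The analogous remark handles (\ref{0.1.7})--(\ref{0.1.9}): reversing (\ref{0.1.8}) $e_1[r_0r_1\cdots]_{2m-1}e_1=e_1$ gives $[r_1r_0\cdots]_{2m-1}e_1=e_1$... wait, more carefully, reversing the left side yields $e_1[r_1r_0\cdots]_{2m-1}$ — but $[r_1r_0\cdots]_{2m-1}^{\op}=[r_0r_1\cdots]_{2m-1}$ since the length is odd, giving $[r_0r_1\cdots]_{2m-1}e_1 = e_1$, i.e.\ (\ref{0.1.9}); so (\ref{0.1.8}) and (\ref{0.1.9}) are interchanged, and (\ref{0.1.7}) is palindromic up to an interior reversal that preserves its form.

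The step I expect to be the main obstacle is the family (\ref{0.1.13})--(\ref{0.1.15}), the relations $e_0[r_1r_0\cdots]_{2k-1}e_0 = \delta^{\xi_k}(\cdots)$, and its analogue (\ref{0.2.7}): here the left side $e_0[r_1r_0\cdots]_{2k-1}e_0$ is genuinely palindromic (since $2k-1$ is odd, $[r_1r_0\cdots]_{2k-1}^{\op}=[r_0r_1\cdots]_{2k-1}$, which is \emph{not} literally the same word), so applying $\rho$ produces $e_0[r_0r_1\cdots]_{2k-1}e_0$ on the left. One must argue that $e_0[r_1r_0\cdots]_{2k-1}e_0 = e_0[r_0r_1\cdots]_{2k-1}e_0$ already holds in $\Br(\ddI_2^n)$ — this should follow from $r_0e_0=e_0r_0=e_0$ absorbing the outer generator of whichever alternating word one starts with, so both products collapse to the same thing — and that the right-hand sides $[r_1r_0\cdots]_{2m-1}e_0$, $e_0$, $e_0e_1e_0$ are each fixed by $\rho$ modulo $I$ (the first because $[r_1r_0\cdots]_{2m-1}^{\op}e_0 = [r_0r_1\cdots]_{2m-1}e_0$ and then $e_0$-absorption again equates the two; the third because $(e_0e_1e_0)^{\op}=e_0e_1e_0$ is literally palindromic). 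Granting these, every defining relation is $\rho$-stable, so $\rho$ descends; it is bijective since $\rho^2=\mathrm{id}$, and it reverses products, which is the assertion. I would also remark that the same verification, with the simpler relation list, covers the odd case $n=2m-1$, the only new relation being (\ref{0.2.6}) $[r_0r_1\cdots]_{2m-2}e_0 = e_1[r_0r_1\cdots]_{2m-2}$, whose reversal $e_0[r_1r_0\cdots]_{2m-2} = [r_1r_0\cdots]_{2m-2}e_1$ is again of the same form (conjugating $e_0$ to $e_1$ by the long word), hence in $I$.
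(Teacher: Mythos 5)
Your overall strategy --- check that reversal carries each defining relation to a consequence of the relations, then note that $\rho^2=\mathrm{id}$ --- is exactly the paper's, and your treatment of (\ref{0.1.3})--(\ref{0.1.6}), of (\ref{0.1.20}) and (\ref{0.2.8}), of the swap of (\ref{0.1.8}) with (\ref{0.1.9}) and of (\ref{0.1.10}) with (\ref{0.1.11}), and of (\ref{0.2.6}), is fine. However, your handling of the family (\ref{0.1.13})--(\ref{0.1.15}) and (\ref{0.2.7}) rests on a parity slip: an alternating word of \emph{odd} length in two involutions is a palindrome, so $([r_1r_0\cdots]_{2k-1})^{\mathrm{op}}=[r_1r_0\cdots]_{2k-1}$, \emph{not} $[r_0r_1\cdots]_{2k-1}$ as you assert. (It is the even-length words whose reversal switches the leading letter, as you correctly use for (\ref{0.1.20}).) Hence the left-hand sides $e_0[r_1r_0\cdots]_{2k-1}e_0$ and $e_1[r_0r_1\cdots]_{2k-1}e_1$ are literally fixed by the reversal and nothing needs to be proved there. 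Worse, the patch you offer for this non-existent discrepancy is false: absorption gives $e_0[r_0r_1\cdots]_{2k-1}e_0=e_0[r_1r_0\cdots]_{2k-3}e_0$, which is the instance of the relation for $k-1$, not for $k$; the two sides generally differ by a power of $\delta$ and, in the even case, can even have different right-hand shapes, so ``both products collapse to the same thing'' does not hold.

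The one member of this family that genuinely requires an argument is (\ref{0.1.13}), because of its right-hand side: $([r_1r_0\cdots]_{2m-1}e_0)^{\mathrm{op}}=e_0[r_1r_0\cdots]_{2m-1}$, and this equals $[r_1r_0\cdots]_{2m-1}e_0$ precisely by (\ref{0.1.7}) --- the relation the paper invokes at this point and which your argument never uses. (Your proposed ``$e_0$-absorption'' only strips the last letter off $[r_0r_1\cdots]_{2m-1}$; it does not move $e_0$ across the whole word.) With the palindrome observation in place and (\ref{0.1.7}) cited for the reversal of (\ref{0.1.13}), your proof becomes correct and coincides with the paper's.
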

\begin{proof} It suffices to check the defining relations given in Definition \ref{0.1} and Definition \ref{0.2}
still hold under the anti-involution.  An easy inspection shows that all
relations involved in the definition are invariant under $\op$, except for
(\ref{0.1.8}), (\ref{0.1.9}), (\ref{0.1.10}), (\ref{0.1.11}), and
(\ref{0.1.13}).  The relation obtained by applying the anti-involution to (\ref{0.1.13})
holds due to (\ref{0.1.7}).  The equalities (\ref{0.1.8}) and
(\ref{0.1.10}) are the op-duals of (\ref{0.1.9}) and (\ref{0.1.11}), respectively. Hence our claim holds.
\end{proof}
\begin{prop}\label{odd}Suppose that $D_{0}^{2m-1}$ is the left coset of the
subgroup generated by $r_0$ in $W(\ddI_2^{2m-1})$. Up to some powers
of $\delta$, each element in $\BrM(\ddI_2^{2m-1})$ can be written as an element  in $W(\ddI_2^{2m-1})$ or
$ue_0v$,  where $u\in D_0^{2m-1} $ and $v\in (D_0^{2m-1})^{\rm op}$.
\end{prop}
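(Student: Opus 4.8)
The plan is to induct on the number of $e$-generators occurring in a word and to collapse every product of them down to the single idempotent $e_0$; this is possible in the odd case precisely because there $e_1$ is a conjugate of $e_0$ inside the algebra. Concretely, put $g:=[r_0r_1\cdots]_{2m-2}\in W(\ddI_2^{2m-1})$; relation~(\ref{0.2.6}) reads $ge_0=e_1g$, so $e_1=ge_0g^{-1}$. After replacing every occurrence of $e_1$ in a given word by $ge_0g^{-1}$, and then, using the earlier lemma (that $r_0$ and $r_1$ generate a copy of $W(\ddI_2^{2m-1})$ inside $\BrM(\ddI_2^{2m-1})$), absorbing every maximal subword in $r_0,r_1$ into a single group element, one finds that every monomial equals, up to a power of $\delta$, an expression $w_0e_0w_1e_0\cdots e_0w_k$ with $w_0,\dots,w_k\in W(\ddI_2^{2m-1})$ and $k\ge0$. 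If $k=0$ this is a power of $\delta$ times a group element and we are done, so assume $k\ge1$.

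The crux is the claim that for every $w\in W(\ddI_2^{2m-1})$ one has $e_0we_0=\delta^{c(w)}e_0$ for some integer $c(w)$. Since $r_0e_0=e_0r_0=e_0$ by~(\ref{0.2.4}), the element $e_0we_0$ depends only on the double coset $\langle r_0\rangle w\langle r_0\rangle$, so I may replace $w$ by a minimal-length representative $w'$ of that coset. In the dihedral group $W(\ddI_2^{2m-1})$ (recall $2m-1$ is odd) these representatives are exactly $1$ together with $[r_1r_0\cdots]_{2k-1}$ for $1\le k\le m-1$; the longest element $w_{\mathrm{long}}$ adds nothing new, because $r_0w_{\mathrm{long}}r_0=[r_1r_0\cdots]_{2m-3}$. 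For $w'=1$ the claim is $e_0^2=\delta^{\kappa_0}e_0$ from~(\ref{0.2.5}), and for $w'=[r_1r_0\cdots]_{2k-1}$ with $0<k<m$ it is exactly relation~(\ref{0.2.7}). This proves the claim.

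Granting the claim, I collapse the interior $e_0$'s one at a time, $w_0e_0w_1e_0w_2e_0\cdots e_0w_k=\delta^{c(w_1)}w_0e_0w_2e_0\cdots e_0w_k=\cdots=\delta^{c}\,w_0e_0w_k$, and then normalize the two surviving group factors: writing $w_0=ur_0^{\veps}$ with $u$ the minimal-length representative of the left coset $w_0\langle r_0\rangle$ gives $w_0e_0=ue_0$ with $u\in D_0^{2m-1}$, by~(\ref{0.2.4}); and writing $w_k=r_0^{\veps'}v$ with $v$ the minimal-length representative of the right coset $\langle r_0\rangle w_k$ gives $e_0w_k=e_0v$, where $v$, being the inverse of a minimal left coset representative, lies in $(D_0^{2m-1})^{\op}$ (here $\op$ restricts to inversion on $W(\ddI_2^{2m-1})$; see Proposition~\ref{prop:opp}). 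Hence the monomial equals $\delta^{a}\,ue_0v$ for some $a\in\Z$, as required.

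I expect the main obstacle to be the bookkeeping that $1,\,[r_1r_0\cdots]_{1},\,[r_1r_0\cdots]_{3},\,\dots,\,[r_1r_0\cdots]_{2m-3}$ genuinely exhaust the $(\langle r_0\rangle,\langle r_0\rangle)$-double cosets of $W(\ddI_2^{2m-1})$, so that relations~(\ref{0.2.5}) and~(\ref{0.2.7}) among these suffice to flatten every $e_0we_0$. The delicate case is the longest element, which is \emph{not} one of these representatives: it has reduced expressions beginning and ending in $r_0$ (because $2m-1$ is odd), hence collapses, after one deletes an $r_0$ on each side, to $[r_1r_0\cdots]_{2m-3}$, covered by~(\ref{0.2.7}) for $k=m-1$. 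Once this is settled the rest is routine, and one notes that the number of normal forms produced is at most $2(2m-1)+(2m-1)^2$, matching the odd case of the rank formula in Theorem~\ref{main}.
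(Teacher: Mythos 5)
Your proof is correct and follows essentially the same route as the paper: use (\ref{0.2.6}) to write $e_1$ as a $W(\ddI_2^{2m-1})$-conjugate of $e_0$, absorb $r_0,r_1$ into group elements, and flatten every $e_0we_0$ to $\delta^{c}e_0$ via (\ref{0.2.5}) and (\ref{0.2.7}), then normalize the outer factors using (\ref{0.2.4}). The only difference is one of detail: the paper simply asserts closure of $D_0^{2m-1}e_0(D_0^{2m-1})^{\op}$ under left multiplication by $e_0$ "by (\ref{0.2.7})", whereas you make explicit the $(\langle r_0\rangle,\langle r_0\rangle)$-double-coset enumeration (including the longest element collapsing to $[r_1r_0\cdots]_{2m-3}$) that justifies it.
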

\begin{proof} By (\ref{0.2.6}), it follows that $e_1$ is conjugate to $e_0$ under $W(\ddI_2^{2m-1})$;
hence we only need to prove that  $D_0^{2m-1}e_0 (D_0^{2m-1})^{\rm op}$ is closed under multiplication  by
the generators $e_0$, $r_0$ and $r_1$ up to some power of $\delta$. By Proposition \ref{prop:opp}, and invariance of the set
  $D_0^{2m-1}e_0 (D_0^{2m-1})^{\rm op}$ under  the natural involution, it suffices to prove it is closed under
  left multiplication.
  For $r_0 $ and $r_1$,  we just apply
(\ref{0.2.4}).
For $e_0$,
  it follows from (\ref{0.2.7}).
\end{proof}

Let $\Psi$ be the root system of $\ddI_2^n$, and $\Psi^+$ be the positive roots with respect to
  $\beta_0$, $\beta_1$  which are  roots corresponding to $r_0$, $r_1$,  respectively. We consider the
 natural action of $W(\ddI_2^n)$ on $\Psi^+$ by negating the negative roots once these appear in this action.

\begin{lemma}\label{N_i}
 Let $N_0$, $N_1$ be  stabilizers of $\beta_0$ and $\beta_1$ in $W(\ddI_2^{2m})$, respectively.
Then for any element $a\in N_i$, we have that $ae_ia^{-1}=e_i$, for $i=0$, $1$.
\end{lemma}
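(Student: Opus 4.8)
The plan is to reduce the statement to a computation inside the Coxeter group $W(\ddI_2^{2m})$ and then exploit the defining relations of $\Br(\ddI_2^{2m})$. First I would determine the stabilizer $N_i$ of $\beta_i$ explicitly. Since $W(\ddI_2^{2m})$ is a dihedral group of order $4m$ acting on the $2m$ positive roots $\Psi^+$, the stabilizer of any one positive root $\beta_i$ is a subgroup of order $2$: it is generated by the unique reflection fixing $\beta_i$, namely the reflection $s_i$ whose root is $\beta_i$ itself, which in terms of our generators is $r_0$ for $i=0$ and $r_1$ for $i=1$. So $N_i=\{1,r_i\}$. (One must be slightly careful with the convention of negating negative roots, but a reflection $s_\gamma$ fixes $\beta$ if and only if $\gamma\perp\beta$ or $\gamma=\pm\beta$; in the dihedral case with $2m$ positive roots the only nontrivial element fixing $\beta_i$ under the sign-corrected action is $r_i$.)

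Once $N_i=\langle r_i\rangle$ is identified, the lemma reduces to checking $r_i e_i r_i^{-1}=e_i$ for $i=0,1$. This is immediate from relation (\ref{0.1.4}): $r_i e_i = e_i r_i = e_i$, hence $r_i e_i r_i = e_i r_i = e_i$, and since $r_i^{-1}=r_i$ by (\ref{0.1.3}) we get $r_i e_i r_i^{-1}=e_i$. Then for a general $a\in N_i$, either $a=1$, in which case the claim is trivial, or $a=r_i$, which is the case just handled. So $a e_i a^{-1}=e_i$ for every $a\in N_i$.

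The only genuine content, therefore, is the determination of $N_i$: one must verify that no \emph{other} element of the dihedral group $W(\ddI_2^{2m})$ stabilizes $\beta_i$ under the prescribed action, and that $r_i$ genuinely does. I would do this by a direct description of the action of $W(\ddI_2^{2m})$ on $\Psi^+$: label the positive roots $\gamma_0=\beta_0,\gamma_1,\dots,\gamma_{2m-1}=\beta_1$ (or a similar indexing) around the half-circle, note that $r_0$ and $r_1$ act as the two reflections generating the dihedral group, and that the sign-corrected action of a rotation $w$ sends $\gamma_j$ to $\gamma_{j'}$ for the obvious shifted index, fixing nothing unless $w=1$; while a reflection $s_\gamma$ fixes exactly the root(s) $\pm\gamma$, i.e. fixes $\beta_i$ precisely when $s_\gamma=r_i$. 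This pins down $N_i=\{1,r_i\}$.

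The main obstacle I anticipate is purely bookkeeping: getting the sign-correction convention straight so that one can correctly say which elements of the dihedral group fix $\beta_i$ in this modified action (as opposed to the linear action, where a reflection would send $\beta_i$ to $-\beta_i$). Beyond that, the algebraic step is a one-line consequence of (\ref{0.1.3}) and (\ref{0.1.4}), so no real difficulty is expected there.
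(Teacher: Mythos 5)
There is a genuine gap: your identification of the stabilizer $N_i$ is wrong in the case at hand. The lemma is stated for $W(\ddI_2^{2m})$, a dihedral group whose root system has an \emph{even} number $2m$ of positive roots. You correctly record the criterion that a reflection $s_\gamma$ fixes $\beta_i$ under the sign-corrected action iff $\gamma\perp\beta_i$ or $\gamma=\pm\beta_i$, but then you fail to apply it: when the positive roots sit at angles $j\pi/(2m)$, for each $\beta_i$ there \emph{is} a positive root orthogonal to it, so the reflection in that orthogonal root also fixes $\beta_i$; moreover the longest element $w_0=[r_1r_0\cdots]_{2m}=-1$ lies in $W(\ddI_2^{2m})$ and sends $\beta_i$ to $-\beta_i$, hence fixes it after sign-correction. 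Thus $N_i$ has order $4$, not $2$: $N_0=\langle r_0,[r_1r_0\cdots]_{2m-1}\rangle$ and $N_1=\langle r_1,[r_0r_1\cdots]_{2m-1}\rangle$ (the length-$(2m-1)$ words are exactly the reflections $w_0r_0$ and $w_0r_1$ in the roots orthogonal to $\beta_0$, $\beta_1$). Your claim that ``the only nontrivial element fixing $\beta_i$ is $r_i$'' is only true when the number of positive roots is odd, i.e.\ for $\ddI_2^{2m-1}$, which is not the case the lemma addresses.

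As a consequence, your verification covers only the trivial part of the lemma. The check $r_ie_ir_i=e_i$ via (\ref{0.1.3}) and (\ref{0.1.4}) is fine, but the substantive content is the invariance of $e_i$ under conjugation by the extra stabilizing elements: one needs $[r_1r_0\cdots]_{2m-1}\,e_0\,[r_1r_0\cdots]_{2m-1}=e_0$, which is precisely relation (\ref{0.1.7}), and $[r_0r_1\cdots]_{2m-1}\,e_1\,[r_0r_1\cdots]_{2m-1}=e_1$, which follows from (\ref{0.1.8}) together with (\ref{0.1.9}). These are the checks the paper's proof performs and your proposal omits entirely; without them the lemma is not established, and the subsequent well-definedness of $e_\beta=we_iw^{-1}$ would fail.
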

\begin{proof} When $n$ is odd, $N_i=\left< r_i \right>$ for $i=0$, $1$.
Hence the lemma holds because
$$r_i e_ir_i\overset{(\ref{0.2.4})}{=}e_i.$$

When $n=2m$ even, we have that  $N_0=\left<r_0, [r_1r_0\cdots]_{2m-1}\right>$
and $N_1=\left<r_1, [r_0r_1\cdots]_{2m-1}\right>$, hence the lemma holds thanks to the following equalities.
\begin{eqnarray*} r_ie_ir_i&\overset{(\ref{0.1.4})}{=}&e_i,
\\ \left[r_1r_0\cdots]_{2m-1}e_0[r_1r_0\cdots\right]_{2m-1}&\overset{(\ref{0.1.7})}{=}&e_0,
\\ \left[r_0r_1\cdots]_{2m-1}e_1[r_0r_1\cdots\right]_{2m-1}&\overset{(\ref{0.1.8})+(\ref{0.1.9})}{=}&e_1.
\end{eqnarray*}

\end{proof}

Consider a positive root $\beta$ and a node $i$ of type $\ddI_2^n$. If there
exists $w\in W$ such that $w\beta_i=\beta$, then we can define the element
$e_{\beta}$ in $\BrM(\ddI_2^n)$ by
$$e_{\beta}=we_iw^{-1}.$$
The above lemma implies that $e_\beta$ is well defined.

\begin{lemma}\label{anyr}
Let $D_i^{2m}$ be a left coset representatives for $N_i$ in $W(\ddI_2^{2m})$ for $i=0$, $1$,
and $K_0=\langle [r_1r_0\cdots]_{2m-1}\rangle\subset N_0$, $K_1=\langle1\rangle\subset N_1$.
Then for any $r\in W(\ddG_2)$, there exist $a\in D_i$ and  $b\in K_i$,
such that
 $$re_i=ae_ib.$$
\end{lemma}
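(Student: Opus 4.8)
The plan is to reduce $re_i$ by combining the left-coset decomposition of $W(\ddI_2^{2m})$ modulo $N_i$ with Lemma~\ref{N_i} and the defining relations that involve $e_i$. Since $D_i^{2m}$ is a full set of left coset representatives for $N_i$, the element $r$ lies in a unique coset $aN_i$ with $a\in D_i^{2m}$; setting $n:=a^{-1}r\in N_i$ gives $r=an$, and by Lemma~\ref{N_i} the element $n$ commutes with $e_i$, so
$$re_i=ane_i=ae_in.$$
It then remains only to rewrite $e_in$ in the form $e_ib$ with $b\in K_i$, for which I would use the explicit generating sets of $N_i$ recorded in the proof of Lemma~\ref{N_i}, namely $N_0=\langle r_0,w\rangle$ with $w:=[r_1r_0\cdots]_{2m-1}$ and $N_1=\langle r_1,w'\rangle$ with $w':=[r_0r_1\cdots]_{2m-1}$.

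I would first treat $i=1$. Here $e_1r_1=e_1$ by~(\ref{0.1.4}) and $e_1w'=e_1$ by~(\ref{0.1.8}), so right multiplication of $e_1$ by either generator of $N_1$ gives back $e_1$; an immediate induction on word length in these generators (note $r_1$ and $w'$ are involutions, so inverses cause no trouble) yields $e_1n=e_1$ for all $n\in N_1$. Hence $re_1=ae_1=ae_1\cdot 1$, which is the claimed form with $a\in D_1^{2m}$ and $1\in K_1=\langle 1\rangle$.

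For $i=0$ the point is that $e_0$ does \emph{not} absorb the second generator, only commutes with it: $e_0r_0=r_0e_0=e_0$ by~(\ref{0.1.4}) and $e_0w=we_0$ by~(\ref{0.1.7}). Processing a word in $r_0$ and $w$ one letter at a time --- deleting each $r_0$ and sliding each $w$ past $e_0$ --- I would get $e_0n=w^je_0=e_0w^j$, where $j$ is the total exponent of $w$ occurring in $n$. Since $w^j\in\langle w\rangle=K_0$, this gives $re_0=ae_0w^j$ with $a\in D_0^{2m}$ and $w^j\in K_0$, as required.

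There is no substantial obstacle: the whole content is the reduction $re_i=ae_in$ from Lemma~\ref{N_i} followed by a one-line normalisation of $e_in$. The one thing to watch is precisely the asymmetry just noted --- relation~(\ref{0.1.8}) makes $e_1$ absorb the extra generator $[r_0r_1\cdots]_{2m-1}$ of $N_1$, which is why $K_1$ may be taken trivial, while relation~(\ref{0.1.7}) only makes $e_0$ commute with the extra generator $[r_1r_0\cdots]_{2m-1}$ of $N_0$, so that a power of it genuinely has to be carried along in $K_0$.
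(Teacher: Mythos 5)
Your argument is correct and is essentially the paper's own (one-line) proof written out in full: decompose $r=an$ with $a\in D_i^{2m}$, $n\in N_i$, and then absorb or slide $n$ past $e_i$ using (\ref{0.1.4}), (\ref{0.1.7}) and (\ref{0.1.8})--(\ref{0.1.9}), which is exactly the set of relations the paper cites. The asymmetry you flag between $K_0$ and $K_1$ is the right observation and matches the intended content.
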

\begin{proof}
It is a direct result for $(\ref{0.1.4})$, $(\ref{0.1.7})$ and $(\ref{0.1.9})$.
\end{proof}

\begin{prop}\label{even}Up to some power of $\delta$,  each element in $\BrM(\ddI_2^{2m})$ can be written as
\begin{enumerate}[(i)]
\item  $a\in W(\ddI_2^{2m})$,
\item  $u e_iv w$ , $u\in D_i^{2m}$, $v\in K_i$, $w\in (D_i^{2m})^{\rm op}$ for $i=0,\,1,$
\item  $u' e_0 e_1w'$,  $u'\in D_0^{2m}$,  $w'\in (D_1^{2m})^{\rm op}$,
\item  $u'' e_1 e_0w''$, $u''\in D_1^{2m}$,  $w''\in (D_0^{2m})^{\rm op}$,
\item  $u''' e_0 e_1e_0w'''$,$u'''\in D_0^{2m}$,  $w'''\in (D_0^{2m})^{\rm op}$.
\end{enumerate}

\end{prop}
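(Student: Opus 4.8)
\textbf{Proof proposal for Proposition \ref{even}.}
The plan is to mimic the strategy of Proposition \ref{odd}: show that the set $S$ of monomials of the five listed shapes (together with all powers of $\delta$) is closed under left multiplication by each of the generators $r_0$, $r_1$, $e_0$, $e_1$, and contains $1$; since $S$ is also stable under $\op$ by Proposition \ref{prop:opp} (note the list is symmetric under $\op$, with (iii) and (iv) interchanged and (i), (ii), (v) self-dual), closure under left multiplication gives closure under right multiplication as well, hence $S$ is the whole monoid $\BrM(\ddI_2^{2m})$. First I would record that, by Lemma \ref{N_i} and the definition of $e_\beta$, conjugating $e_i$ by an element of $W(\ddI_2^{2m})$ produces an element $e_\beta$, and that using Lemma \ref{anyr} any product $r e_i$ with $r\in W$ can be normalized into $D_i^{2m}e_iK_i$; this already handles the ``single $e$'' bookkeeping and is what forces the extra factor $v\in K_i$ into shape (ii).

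The bulk of the argument is a case analysis on left-multiplying an element of $S$ by a generator. Multiplying a shape-(i) element $a$ by $r_j$ stays in (i); multiplying $a$ by $e_j$ gives $ae_j$, which by Lemma \ref{anyr} rewrites as $u e_j v$ with $u\in D_j^{2m}$, $v\in K_j$, i.e.\ shape (ii) (with $w=1$). For a shape-(ii) element $u e_i v w$, left multiplication by $r_j$ or $e_j$ only affects the prefix $u e_i v$; writing $v=[r_1r_0\cdots]_{2m-1}^{\veps}$ ($\veps\in\{0,1\}$) when $i=0$ and $v=1$ when $i=1$, the product $r_j u e_i v$ or $e_j u e_i v$ is a product of the form $(\text{element of }W)\,e_i\,(\text{element of }N_i)$ unless an $e_j$ meets the $e_i$ with $i\ne j$ — and that is exactly the situation that produces $e_0e_1$, $e_1e_0$, or (after a further collision) $e_0e_1e_0$. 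Here the defining relations (\ref{0.1.6}), (\ref{0.1.10})--(\ref{0.1.12}), and crucially the ``interesting'' relations (\ref{0.1.13})--(\ref{0.1.15}) are what collapse longer alternating products $e_i[r\cdots]e_j$ back down to one of the five shapes, up to a power of $\delta$; for instance a word $e_0 (\text{Coxeter word}) e_1$ is rewritten by moving the Coxeter word past stabilizer parts and then invoking the appropriate $\theta_k$- or $\eta_k$- or $\xi_k$-relation. Shapes (iii)--(v): left-multiplying $u'e_0e_1w'$ by $r_j$ changes only $u'$ (reabsorbed via a coset representative), by $e_0$ it is handled by (\ref{0.1.5}) or by pushing $e_0$ through and using (\ref{0.1.6})/(\ref{0.1.13})--(\ref{0.1.15}) to get shape (v) or back to (iii), and by $e_1$ using $e_1e_0e_1=\delta e_1$ to return to shape (ii); the symmetric cases for (iv) follow by $\op$, and (v) is closed because a further $e_0$ or $e_1$ on the left of $e_0e_1e_0$ is absorbed by (\ref{0.1.5})/(\ref{0.1.6}) and an $r_j$ is absorbed into $u'''$ — noting that $N_0$ contains $[r_1r_0\cdots]_{2m-1}$, which is precisely why no $K_0$-tail is needed in (v) (it would get swallowed by (\ref{0.1.7})).

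The main obstacle I expect is the reduction of a general alternating monomial $e_i [r_1r_0\cdots]_s e_j$ to the normal form when $s$ is large: one must argue that after using $r_ie_i=e_i$ to strip leading/trailing occurrences of $r_i$, the length $s$ can be taken in the range where one of (\ref{0.1.10})--(\ref{0.1.15}) literally applies, using the braid relation (\ref{0.1.20}) (i.e.\ $[r_1r_0\cdots]_{2m}=[r_0r_1\cdots]_{2m}$) to reduce $s$ modulo the Coxeter number and to flip which generator leads. Equivalently, one is tracking the $W(\ddI_2^{2m})$-orbit of $\beta_0$ and $\beta_1$ and using that $e_\beta$ depends only on the orbit, with the $\delta$-powers accounted for by Lemma \ref{km}; making this bookkeeping precise — in particular checking that the representative $u$ always lands in $D_i^{2m}$ rather than merely in $W$, and that the residual stabilizer part always lands in $K_i$ rather than all of $N_i$ — is the delicate point, and it is where the dihedral combinatorics of $W(\ddI_2^{2m})$ (every element is $[r_0r_1\cdots]_s$ or $[r_1r_0\cdots]_s$ for a unique $s<2m$) does the real work.
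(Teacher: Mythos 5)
Your proposal follows the same strategy as the paper's proof: establish that any $e_i\,(\text{Coxeter word})\,e_j$ collapses, via the relations (\ref{0.1.4}), (\ref{0.1.9})--(\ref{0.1.15}) and $r_i^2=1$, to one of the short products $e_0$, $[r_1r_0\cdots]_{2m-1}e_0$, $e_1$, $e_0e_1$, $e_1e_0$, $e_0e_1e_0$ up to a power of $\delta$, then check closure of the five shapes under left multiplication by generators and invoke Proposition \ref{prop:opp} to dispense with right multiplication. The paper's proof is essentially your outline with the single explicit computation $e_0[r_1r_0\cdots]_{2k}e_1\doteq e_0e_1$ for $m<2k\le 2m$ carried out; no substantive difference.
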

\begin{proof}

Let us first prove  the claim that the monomial $e_0re_1$  can be written as
$e_0e_1$ for any $r\in W(\ddI_2^{2m})$ up to some  power of $\delta$. In view of  (\ref{0.1.4}), we only need  consider the elements that can
be written as $[r_1r_0\cdots]_{2k}$ with $2k\le 2m$. Also thanks to (\ref{0.1.10}), we can restrict ourselves to $m<2k\leq 2m$,   follows
from the below.
\begin{eqnarray*}e_0[r_1r_0\cdots]_{2k}e_1
&\overset{(\ref{0.1.3})}{=}& e_0[r_0r_1\cdots]_{2m-2k-1}[r_0r_1\cdots]_{2m-1}e_1\\
&\overset{(\ref{0.1.9})}{=}& e_0r_0[r_1r_0\cdots]_{2m-2k-2}e_1\\
&\overset{(\ref{0.1.4})}{=}&  e_0[r_1r_0\cdots]_{2m-2k-2}e_1\\
&\overset{(\ref{0.1.10})}{=}& \delta^{\theta_{m-k-1}} e_0e_1.
\end{eqnarray*}

It follows that for any  $r\in W(\ddI_2^{2m})$, the monomial $e_ire_j$  can be
written as  one of $e_0$, $[r_1r_0\cdots]_{2m-1}e_0$ $e_1$, $e_0e_1$, $e_1e_0$, and $e_0e_1e_0$ up to some  power of $\delta$.

To prove the lemma, it remains to prove that those five kinds of normal forms are closed under multiplication by  generators $e_i$. Thanks to Proposition \ref{prop:opp}, we only need consider multiplication from the left. By the conclusion from the above paragraph, the lemma holds.
\end{proof}
\section{The rank of $\rm{Im}\phi$}

To prove Theorem \ref{main}, it suffices to prove that those rewritten forms in Proposition
\ref{odd} and \ref{even} are different
diagrams in $\Br(\ddA_{n-1})$.
The problem can be reduced to the counting of orbit sizes.
\\ \rm{(I)}\, If $n=2m-1$ odd, then
\begin{eqnarray*}
 \#\phi(W(\ddI_2^{2m-1}))(\{\alpha_{2t} \}_{t=1}^{m-1})=2m-1,
 \end{eqnarray*}
\\ \rm{(II)} \,If $n=2m$  even, then
\begin{eqnarray*}
\#\phi(W(\ddI_2^{2m}))(\{\alpha_{2t} \}_{t=1}^{m-1})&=&m,\\
\#\phi(W(\ddI_2^{2m}))(\{\alpha_{2t-1} \}_{t=1}^{m})&=&m,\\
\#\phi(W(\ddI_2^{2m}))(\phi(e_0)\{\alpha_{2t-1} \}_{t=1}^{m})&=&m,
\end{eqnarray*}
and the last two orbits are different.
\\First we consider $n=2m-1\geq 5$.
\\ By  (\ref{odds}), when  $0<s<2m-1$, we see  that $s+1$ is not
occupied  in the horizontal strands of $\phi([r_1r_0r_1\cdots]_s^{-1})(\{\alpha_{2t} \}_{t=1}^{m-1})$. Then
 $\#\phi(W(\ddI_2^{2m-1}))(\{\alpha_{2t} \}_{t=1}^{m-1})$ is at least  $2m-1$.
But the subgroup $\left<r_0\right>$ stabilizes $\{\alpha_{2t} \}_{t=1}^{m-1}$; therefore by Lagrange's Theorem,
{\rm(I)} holds.

When $n=2m>5$, we define
 \begin{eqnarray*}\alpha&=&\Sigma_{i=1}^{2m-1}\alpha_i,\\
 Y_0&=&\{\alpha_{2t} \}_{t=1}^{m-1},\\
 Y_1&=&\{\alpha_{2t-1} \}_{t=1}^{m},\\
 Y_2&=&\phi(e_0)Y_1=Y_0\cup \{\alpha\}.
 \end{eqnarray*}

Now we consider the case when $m=2m'+1$ is odd.
Here we denote by $h(\gamma)$  the height of  $\gamma\in \Phi^+$
 which means the sum of the coefficients of simple roots for $\gamma$ written as the linear combination of
 simple roots.
We find that when $0\leq s\leq m-2$,
$$\max \{h(\gamma)\mid \gamma\in \phi([r_1r_0\cdots]_s^{-1})(Y_0)   \}
= h(\phi([r_1r_0\cdots]_s^{-1})(\alpha_{2m'}))=2s+1,$$
  $$\max \{h(\gamma)\mid \gamma\in \phi([r_1r_0\cdots]_{2m'}^{-1})(Y_0)   \}
= h(\phi([r_1r_0\cdots]_{2m'}^{-1})(\alpha_{2m'}))=4m'.$$
Then it follows that $\#W(\ddI_2^{4m'+2})(Y_0)$ is at least $m$. At the same time
$\left<r_0, [r_1r_0\cdots]_{2m-1}\right>$ stabilizes $Y_0$; therefore it follows from  Lagrange's Theorem
 that $\#W(\ddI_2^{4m'+2})(Y_0)$ is exactly $m$.
 \\Similarly we see that when $0\leq s\leq m-1$
 $$\max \{h(\gamma)\mid \gamma\in \phi([r_0r_1\cdots]_s^{-1})(Y_1)   \}
= h(\phi([r_0r_1\cdots]_s^{-1})(\alpha_{m}))=2s+1.$$
Thus $W(\ddI_2^{4m'+2})(Y_1)$ has at least $m$ elements. At the same time
$\left<r_1, [r_0r_1\cdots]_{2m-1}\right>$ stabilizes $Y_1$, and so by Lagrange's Theorem,
 $\#W(\ddI_2^{4m'+2})(Y_1)=m$.
 \\ When we consider   $W(\ddI_2^{4m'+2})(Y_2)$, we need some result
  from \cite[Section 5]{CLY2010}, we see that $Y_2$ consists of $m'$ symmetric pairs and $1$ symmetric roots, and
  this numerical information is not changed under $W(\ddI_2^{4m'+2})\subset W(\ddC_{2m'+1})$ (Weyl group of type $\ddC_{2m'+1}$ in \cite[Section 5]{CLY2010}.
 The orbit $W(\ddI_2^{4m'+2})(\{\alpha\})$ has at least $m$ elements,
 hence using the same argument as the above, we see that   $\#W(\ddI_2^{4m'+2})(Y_2)$ is also exactly $m$.
 \\ To prove that the orbits of $Y_1$ and $Y_2$ have no intersection, it suffices to verify that $Y_2$ is not in the orbit
 of $Y_1$.
 By the above, we see that $\alpha$ only occurs  in $Y=\phi([r_0r_1\cdots]_{2m'})(Y_1)$  in  the orbit of $Y_1$ under $W(\ddI_2^{2m})$.
 But  $h(\phi([r_0r_1\cdots]_{2m'})(\alpha_{m-2}))=4m'-2>1$, which contradicts the heights of elements in $Y\setminus\{\alpha\}$.
With {\rm (II)} verified, we have proved the Theorem \ref{main} for $n\equiv 2$ $\mod\, 4$, and $n\geq 5$.
\\ At last, consider the case when  $n=2m\geq 5$, and $m=2m'$.
The  formula  $\#W(\ddI_2^{4m'})(Y_1)=\#W(\ddI_2^{4m'})(Y_0)=m$ can be proved by the same argument as the above.
\\From \cite[Section 5]{CLY2010}, we see that $Y_1$ has $m'$  pairs of  symmetric roots and no symmetric root,
and $Y_2$ has $m'-1$ pairs and $2$ symmetric roots $\alpha$ and $\alpha_{2m'}$. Hence the $W(\ddI_2^{2m})$-orbits of $Y_1$ and
$Y_2$
have no intersection.
\\When $0\leq s<2m'-3$, we have

$$\max\{h(\gamma)\mid \gamma\in \phi([r_1r_0\cdots]_{s+1}^{-1})(Y_2\setminus\{\alpha_{2m'}, \alpha\}) \}$$
$$>\max\{h(\gamma)\mid \gamma\in \phi([r_1r_0\cdots]_{s}^{-1})(Y_2\setminus\{\alpha_{2m'}, \alpha\}) \},$$

so  the orbit of $Y_2$ has at least $2m'-2$ elements.
Therefore the cardinality of the stabilizer in $W(\ddI_2^{m})$ is smaller than $\frac{8m'}{2m'-2}$.
If $m'>3$, $\frac{8m'}{2m'-2}<6$, but  the group $\left<r_0, [r_1r_0\cdots]_{4m'-1}\right>$ stabilizes $Y_2$, hence
  the subgroup will be the full stabilizer. By checking when $m'=2$, $3$,
finally, we see that $\#W(\ddI_2^{4m'})(Y_1)=2m'=m$.
With {\rm (II)} verified,
 we have proved the main theorem for $n\equiv 0$ $\mod\, 4$, and $n\geq 5$.

Now we have the following decomposition of $\Br(\ddI_2^n)$ as a $\Z[\delta^{\pm 1}]$-module.
\begin{eqnarray*}
\Br(\ddI_2^{n})&=&\Br(\ddI_2^n)/(e_0)\oplus(e_0), \quad 2\nmid n, \\
\Br(\ddI_2^{n})&=&\Br(\ddI_2^n)/(e_0)\oplus(e_0)/(e_0e_1e_0)\oplus(e_0e_1e_0), \quad 2\mid n.
\end{eqnarray*}
Therefore the  theorem below  about the cellularity can be obtained by an  argument similar to \cite{BO2011}.
\begin{thm}\label{thm.cell}
If $R$ is a field such that   the group ring $R[W(\ddI_2^n)]$ is a cellular algebra, then
the algebra $\Br(\ddI_{2}^n)\otimes R$ is a cellularly stratified algebra.
\end{thm}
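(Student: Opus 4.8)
The plan is to reduce the cellular stratification of $\Br(\ddI_2^n)\otimes R$ to the module-layer decomposition already established, together with the cellularity of $R[W(\ddI_2^n)]$, following the strategy of \cite{BO2011} where the analogous statement is proved for Brauer algebras of classical type. Recall we have the $\Z[\delta^{\pm1}]$-module decompositions
\begin{eqnarray*}
\Br(\ddI_2^{n}) &=& \Br(\ddI_2^n)/(e_0)\oplus(e_0), \quad 2\nmid n,\\
\Br(\ddI_2^{n}) &=& \Br(\ddI_2^n)/(e_0)\oplus(e_0)/(e_0e_1e_0)\oplus(e_0e_1e_0), \quad 2\mid n,
\end{eqnarray*}
and that the rewritten normal forms of Propositions \ref{odd} and \ref{even} furnish explicit $\Z[\delta^{\pm1}]$-bases of each summand. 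First I would promote these to a filtration of $\Br(\ddI_2^n)\otimes R$ by two-sided ideals: in the odd case $0\subset (e_0)\subset \Br(\ddI_2^n)\otimes R$, and in the even case $0\subset (e_0e_1e_0)\subset (e_0)\subset \Br(\ddI_2^n)\otimes R$. The top quotient in each filtration is $\Br(\ddI_2^n)/(e_0,e_1)\otimes R\cong R[W(\ddI_2^n)]$ (using the presentation in Definitions \ref{0.1}, \ref{0.2} and that $e_1$ is conjugate to $e_0$), which is cellular by hypothesis.

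Next I would identify each layer $(e_\ell)/(\text{next term})$ as, up to the grading shift inherent in cellular stratification, a bimodule of the form $V_\ell\otimes_{B_\ell} V_\ell^{\op}$ where $B_\ell$ is the "input algebra" governing that layer. For the bottom layer $(e_0e_1e_0)$ in the even case, left multiplication acts through $u'''\mapsto$ the coset $D_0^{2m}$-part and right multiplication through $(D_0^{2m})^{\op}$, with the middle factor $e_0e_1e_0$ carrying an action of the stabilizer algebra; by Lemma \ref{N_i} and the relations \eqref{0.1.10}--\eqref{0.1.15} this middle action collapses to a scalar ($\delta$-power) action, so $B_\ell$ is essentially $R$ (a one-dimensional input algebra), and the layer is $R D_0^{2m}\otimes_R (R D_0^{2m})^{\op}$, a full matrix coalgebra — exactly the shape required. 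The middle layer $(e_0)/(e_0e_1e_0)$ is spanned by the forms $ue_0vw$, $u'e_0e_1w'$, $u''e_1e_0w''$ of Proposition \ref{even}(ii)--(iv); here the relevant input algebra is $R[N_0]$ (or its quotient by the relations identifying the $K_0$-action with $\delta$-powers, cf.\ Lemma \ref{anyr}), which is again cellular since $N_0$ is a finite Coxeter(-type) group and $R[W(\ddI_2^n)]$ cellular forces cellularity of $R[N_0]$ by the same base change. For the odd case the single nontrivial layer $(e_0)$ is spanned by $ue_0v$ with $u\in D_0^{2m-1}$, $v\in(D_0^{2m-1})^{\op}$ (Proposition \ref{odd}), with input algebra $R[\langle r_0\rangle]\cong R[\Z/2]$, which is cellular.

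The verification that these data satisfy the axioms of a cellularly stratified algebra — that the ideal filtration is by "cell ideals relative to the layer", that each $V_\ell$ is a right module over the next layer down with the exactness/idempotent conditions, and crucially that the layer multiplication $V_\ell\otimes V_\ell^{\op}\to (e_\ell)/(\text{below})$ is an isomorphism of bimodules — is where the real work lies. The iso-of-bimodules condition is precisely the statement, already secured in Section 5, that the rewritten forms are pairwise distinct diagrams in $\Br(\ddA_{n-1})$ under $\phi$ (via the orbit-counting (I), (II)); so I would invoke Theorem \ref{main} and the orbit computations to get injectivity, while surjectivity is the "closed under multiplication" content of Propositions \ref{odd} and \ref{even}. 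The main obstacle I anticipate is pinning down the input algebras $B_\ell$ correctly: one must check that the middle factor (the conjugated idempotent $e_0$, $e_0e_1$, or $e_0e_1e_0$) together with its stabilizer action is honestly a cellular algebra with the right anti-involution (the $\op$ of Proposition \ref{prop:opp} must restrict compatibly), and that the relations \eqref{0.1.10}--\eqref{0.1.15} genuinely force the $\delta$-power collapse rather than leaving a larger algebra — this is exactly where Lemma \ref{km} and the "penetrating walls" analysis feed in. Once the input algebras are correctly identified as iterated from $R[W(\ddI_2^n)]$ by passing to parabolic-type subgroups and quotients (all cellular under the hypothesis), the stratification axioms follow by the formal machinery of \cite{BO2011} applied to our filtration.
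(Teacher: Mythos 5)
Your overall strategy --- filter $\Br(\ddI_2^n)\otimes R$ by the two-sided ideals $(e_0)$ and, in the even case, $(e_0e_1e_0)$, identify the top quotient with $R[W(\ddI_2^n)]$, and feed the layers into the machinery of \cite{BO2011} --- is exactly what the paper intends; indeed the paper offers nothing beyond the module decomposition and the phrase ``an argument similar to \cite{BO2011}'', so your write-up is already more explicit than the source. However, your identification of the layers in the even case is wrong, and it breaks the bimodule isomorphisms you rely on. By relation (\ref{0.1.6}) one has $e_0e_1=\delta^{-1}(e_0e_1e_0)e_1$, $e_1e_0=\delta^{-1}e_1(e_0e_1e_0)$, and even $e_1=\delta^{-2}e_1(e_0e_1e_0)e_1$, so the forms of types (iii) and (iv) of Proposition \ref{even} \emph{and} the type-(ii) forms with $i=1$ all lie in the ideal $(e_0e_1e_0)$, not in the middle layer. (Diagrammatically: $\phi(e_1)$, like $\phi(e_0e_1e_0)$, has no vertical strands, whereas $\phi(e_0)$ has two.) Hence the middle layer $(e_0)/(e_0e_1e_0)$ is spanned only by the elements $ue_0vw$ with $u\in D_0^{2m}$, $v\in K_0$, $w\in(D_0^{2m})^{\op}$, of rank $2m^2$, with input algebra $R[K_0]\cong R[\Z/2]$ rather than $R[N_0]$; and the bottom layer has rank $4m^2$, its tops and bottoms running over the two disjoint $m$-element orbits of $Y_1$ and $Y_2$, not the $m^2$ of your proposed $RD_0^{2m}\otimes_R(RD_0^{2m})^{\op}$. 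With your assignment the dimension of $V_\ell\otimes_{B_\ell}V_\ell^{\op}$ does not match the rank of the corresponding subquotient, so the ``isomorphism of bimodules'' step would fail. (A smaller slip of the same kind: in the odd case $e_0we_0\in\delta^{\Z}e_0$ for all $w\in W(\ddI_2^{2m-1})$ by (\ref{0.2.4}) and (\ref{0.2.7}), so the input algebra there is $R$, not $R[\Z/2]$; here the count $n\cdot n=n^2$ only works with $B_\ell=R$.)

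Once the layers are reassigned as above, the rest of your argument does go through and matches the intended one: the top quotient is $R[W(\ddI_2^n)]$, cellular by hypothesis; the lower input algebras are $R[\Z/2]$ and $R$, which are always cellular with the anti-involution induced by Proposition \ref{prop:opp}; injectivity of the layer multiplication maps is exactly the distinctness of the normal forms, established via $\phi$ and the orbit counts of Section 5; and surjectivity is the closure statement of Propositions \ref{odd} and \ref{even}.
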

\begin{rem}For the hypothesis of the Theorem \ref{thm.cell}, with the method in \cite{G2007}, we conjecture that a sufficient condition for $R[W(\ddI_2^n)]$ is a cellular algebra is that  the characteristic of $R$ does not divide $n$.
\end{rem}
{\bf Acknowledgment} I thank Professor  A . M.Cohen for his reading and giving precious suggestion.

\begin{center}
Shoumin Liu\\
University of Amsterdam\\
email: s.liu@uva.nl\\
liushoumin2003@gmail.com
\end{center}

\end{document}